\theoremstyle{plain}  
\newtheorem{thm}{Theorem}[section]
\newtheorem{cor}[thm]{Corollary}
\newtheorem{lem}[thm]{Lemma}
\newtheorem{prop}[thm]{Proposition}
\theoremstyle{definition}
\newtheorem{df}[thm]{Definition}
\newtheorem{ex}[thm]{Example}
\newtheorem{nt}[thm]{Notations}
\newtheorem{rem}[thm]{Remark}
\newtheorem{para}[thm]{}
\theoremstyle{remark}
\newtheorem*{claim}{Claim}
\DeclareMathOperator{\id}{id}
\DeclareMathOperator{\isoto}{\overset{\scriptstyle{\sim}}{\to}}
\DeclareMathOperator{\rinf}{\rightarrowtail}
\DeclareMathOperator{\rdef}{\twoheadrightarrow}
\DeclareMathOperator{\rinc}{\hookrightarrow}
\newcommand{\onto}[1]{\stackrel{#1}{\to}}
\DeclareMathOperator{\hdim}{hdim}
\DeclareMathOperator{\Ext}{Ext}
\DeclareMathOperator{\im}{Im}
\DeclareMathOperator{\coker}{Coker}
\DeclareMathOperator{\kos}{Kos}
\DeclareMathOperator{\Ch}{\mathbf{Ch}}
\DeclareMathOperator{\op}{op}
\DeclareMathOperator{\Hom}{Hom}
\DeclareMathOperator{\HOM}{\mathcal{HOM}}
\DeclareMathOperator{\colim}{colim}
\DeclareMathOperator{\Homo}{H}
\DeclareMathOperator{\bbN}{\mathbb{N}}
\DeclareMathOperator{\bbZ}{\mathbb{Z}}
\DeclareMathOperator{\fe}{\mathfrak{e}}
\DeclareMathOperator{\ii}{\mathfrak{i}}
\DeclareMathOperator{\jj}{\mathfrak{j}}
\DeclareMathOperator{\cA}{\mathcal{A}}
\DeclareMathOperator{\cB}{\mathcal{B}}
\DeclareMathOperator{\cC}{\mathcal{C}}
\DeclareMathOperator{\cE}{\mathcal{E}}
\DeclareMathOperator{\cF}{\mathcal{F}}
\DeclareMathOperator{\cI}{\mathcal{I}}
\DeclareMathOperator{\cM}{\mathcal{M}}
\DeclareMathOperator{\cP}{\mathcal{P}}
\DeclareMathOperator{\cX}{\mathcal{X}}
\DeclareMathOperator{\cY}{\mathcal{Y}}
\newcommand{\Ab}{\operatorname{\bf Ab}}
\DeclareMathOperator{\Cat}{\mathbf{Cat}}
\newcommand{\Mod}{\operatorname{\bf Mod}}
\DeclareMathOperator{\Lex}{\mathbf{Lex}}
\DeclareMathOperator{\Kos}{\mathbf{Kos}}
\newcommand{\End}{\operatorname{\bf End}}
\newcommand{\gr}{\operatorname{gr}}
\newcommand{\nil}{\operatorname{nil}}
\newcommand{\tf}{\operatorname{tf}}
\def\sn{\smallskip\noindent}
\def\enumidef{\renewcommand{\labelenumi}{$\mathrm{(\arabic{enumi})}$}}
\title{Homotopy invariance of higher $K$-theory for 
abelian categories}
\author{Satoshi Mochizuki and Akiyoshi Sannai}
\date{}
\begin{document}
\maketitle

\begin{abstract}
The main theorem in this paper is that 
the base change functor from an abelian category $\cA$ to 
its polynomial category in the sense of Schlichting
$-\otimes_{\cA}\bbZ[t]:\cA \to \cA[t]$ 
induces an isomorphism on their $K$-theories 
if $\cA$ is noetherian and has enough projective objects. 
The main theorem implies the well-known fact 
that $\mathbb{A}^1$-homotopy invariance of $K'$-theory for noetherian schemes. 
\end{abstract}

\section{Introduction}

Contrary to the importance of $\mathbb{A}^1$-homotopy invariance 
in the motivic homotopy theory \cite{Voe98}, \cite{MV99} and \cite{Voe00}, 
the homotopy invariance of $K'$-theory for noetherian schemes still 
has been mysterious in the following sense. 
After \cite{Sch11}, 
every fundamental theorems except for 
homotopy invariance of $K'$-theory, 
the d\'evissage theorem in \cite{Qui73} and 
the cell filtration theorem in \cite{Wal85} 
are corollaries of 
Thomason-Schlichting localization theorem and 
in the view of non-commutative motive theory \cite{CT09} 
or motive theory for $\infty$-categories 
\cite{BGT10}, 
non-connective $K$-theory is the universal localizing invariant. 
To relate motivic homotopy theory with motive theory 
for DG or $\infty$-categories, 
it is important to make clear the homotopy invariance of $K'$-theory in 
the view of motive theory for higher categories. 
Many authors have already defined affine line over certain categories as in 
\cite{GM96}, 
\cite{Sch06} and \cite{BGT10}. 
The main objective in the paper is 
to examine the homotopy invariance of 
$K$-theory for abelian categories by taking Schlichting polynomial categories. 
Let us recall the definition of polynomial categories.

\begin{df}
\label{df:introdf}

\begin{enumerate}
\enumidef
\item 
(Notation~\ref{nt:endcat}) 
For a category $\cC$, 
let us denote 
the {\bf category of endomorphisms} 
in $\cC$ by $\End \cC$. 
Namely, an object in $\End\cC$ 
is a pair $(x,\phi)$ 
consisting of an object $x$ in $\cC$ 
and morphism $\phi:x\to x$ and a morphism 
between $(x,\phi) \to (y,\psi)$ 
is a morphism 
$f:x \to y$ in $\cC$ such that 
$\psi f= f\phi$. 

\item 
From now on, let $\cA$ be an abelian category. 
Let us denote the category of left exact functors from 
$\cA^{\op}$ to the category of abelian groups $\Ab$ by 
$\Lex \cA$. 
The category $\Lex \cA$ is a Grothendieck abelian category and 
the Yoneda embedding $y:\cA \to \Lex \cA$ is exact and reflects 
exactness.

\item 
(Notation~\ref{nt:noetherianobj}) 
We say an object $x$ in $\cA$ is 
{\bf noetherian} 
if every ascending filtration of subobjects of $x$ is stational. 
We say $\cA$ is {\bf noetherian} if every object in $\cA$ is noetherian. 

\item 
(Definition~\ref{df:S-poly cat def}) 
Let us assume that $\cA$ is a noetherian abelian category and 
let us denote the full subcategory of 
noetherian objects 
in $\End \Lex\cA$ 
by $\cA[t]$ and 
call it the {\bf noetherian polynomial category} 
over $\cA$. 
We can prove that $\cA[t]$ is an abelian category. 
(See Lemma~\ref{lem:noetherian}).

\item 
For an object $a$ in $\cA$, 
let us define an object $a[t](=(a[t],t))$ in $\End \Lex\cA$ as follows. 
The underlying object $a[t]$ is 
$\displaystyle{\bigoplus_{n=0}^{\infty}}at^i$ 
where $at^i$ is a copy of $a$. 
The endomorphism $t:a[t] \to a[t]$ is defined by 
the identity morphisms $at^i \to at^{i+1}$ in each components. 
We can prove that if $a$ is noetherian in $\cA$, then 
$a[t]$ is noetherian in $\cA[t]$. (See Theorem~\ref{thm:Abst Hilb basis}). 
We call the association $-\otimes_{\cA}\bbZ[t]:\cA \to \cA[t]$, 
$a \mapsto a[t]$ the {\bf base change functor} which is an exact functor. 
\end{enumerate}
\end{df}

\sn
The main theorem is the following. 

\begin{thm}[Theorem~\ref{thm:main thm}]
\label{thm:introthm} 
Let $\cA$ be a noetherian abelian category 
which has enough projective objects. 
The functor $-\otimes_{\cA} \bbZ[t]:\cA \to \cA[t]$ induces the 
isomorphism on their $K$-theories
$$K(\cA)\isoto K(\cA[t]).$$
\end{thm}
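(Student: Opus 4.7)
My plan is to construct a $K$-theoretic inverse to the base change functor via a Koszul-type construction and then verify the composites in both directions. The overall structure is analogous to the classical argument for $K_*(R)\cong K_*(R[t])$ via the resolution $0\to M[t]\to M[t]\to M\to 0$ in the module category, but adapted to the framework of $\cA[t]$ in which the underlying object of $(x,\phi)$ may lie in $\Lex\cA\setminus\cA$.

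First, I would define an exact functor $E\colon\cA[t]\to D^b(\cA)$ by $(x,\phi)\mapsto[x\xrightarrow{1-\phi}x]$ placed in degrees $-1$ and $0$. The crucial observation is that the cohomology objects $\ker(1-\phi)$ and $\coker(1-\phi)$ actually lie in $\cA$, not merely in $\Lex\cA$: each is the underlying object of a noetherian object of $\cA[t]$ on which $\phi$ acts as the identity, so its $\cA[t]$-substructure coincides with its $\Lex\cA$-substructure, and for noetherian $\cA$ a noetherian object of $\Lex\cA$ lies in the essential image of the Yoneda embedding. A short exact sequence in $\cA[t]$ gives a short exact sequence of two-term complexes, hence a distinguished triangle in $D^b(\cA)$, so $E$ is exact and induces $K(E)\colon K(\cA[t])\to K(\cA)$. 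For $a\in\cA$ the map $1-t$ on $a[t]$ is injective with cokernel $a$ (evaluation at $t=1$), so $E(a[t],t)\simeq a$ in degree $0$, giving $K(E)\circ K(-\otimes_\cA\bbZ[t])=\id$ and split injectivity of the base change on $K$-theory.

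The main obstacle is the reverse composite, which in $K_0$ amounts to the identity $[(x,\phi)]=[\coker(1-\phi)[t]]-[\ker(1-\phi)[t]]$, with its higher-$K$ analogue via the additivity theorem. When $x\in\cA$ this follows by combining the short exact sequence $0\to(x[t],t)\xrightarrow{t-\phi}(x[t],t)\to(x,\phi)\to 0$ in $\cA[t]$ with the four-term exact sequence for $1-\phi$ in $\cA$. The genuine difficulty is objects $(x,\phi)\in\cA[t]$ with $x\in\Lex\cA\setminus\cA$: here the displayed sequence is unavailable because $x[t]$ fails to be noetherian. In this case one uses the noetherian hypothesis on $(x,\phi)$ to find a generating $\cA$-subobject $x_0\subset x$ (by applying the ascending chain condition to $\sum_{i\le n}\phi^i(y)$ as $y$ varies over $\cA$-subobjects of $x$), obtains a surjection $(x_0[t],t)\twoheadrightarrow(x,\phi)$ with $x_0$ taken projective using that $\cA$ has enough projectives, and iterates on the kernel while applying the additivity theorem to reduce to the case just handled.
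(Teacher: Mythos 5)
Your first half (split injectivity) is workable in spirit: the assignment $(x,\phi)\mapsto[x\xrightarrow{1-\phi}x]$ does have cohomology in $\cA$ (kernel and cokernel are $\phi$-stable sub/quotient objects of $(x,\phi)$, hence noetherian, hence in $\cA$), and $E(a[t],t)\simeq a$. Note, however, that the complex itself has entries in $\Lex\cA$, not in $\cA$, so to get a map $K(\cA[t])\to K(\cA)$ you must pass through a Waldhausen category of bounded complexes in $\Lex\cA$ with cohomology in $\cA$ and invoke a comparison of its $K$-theory with $K(\cA)$ (Gillet--Waldhausen plus $D^b_{\cA}(\Lex\cA)\simeq D^b(\cA)$); this is repairable, and in any case plays no role in the paper, which never constructs a retraction.

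The genuine gap is the reverse composite, which is exactly the hard content of the theorem, and your reduction does not close it. For $(x,\phi)$ whose underlying object lies outside $\cA$ you choose a surjection $(x_0[t],t)\twoheadrightarrow(x,\phi)$ with $x_0$ projective and propose to ``iterate on the kernel.'' But the kernel is a subobject of $(x_0[t],t)$ whose underlying object is again typically outside $\cA$, so the iteration never reaches the case you handled, and it need not terminate at all: for $\cA=\cM_{k[\epsilon]/(\epsilon^2)}$ and the object of $\cA[t]$ corresponding to the $k[\epsilon]/(\epsilon^2)[t]$-module $k$, the successive syzygies are never of the form $x_0[t]$. Even at the level of $K_0$, the identity $[(x,\phi)]=[\coker(1-\phi)[t]]-[\ker(1-\phi)[t]]$ for general objects is essentially equivalent to the surjectivity you are trying to prove, so without a new mechanism the argument is circular. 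Moreover, for higher $K$-theory the additivity theorem applies only to short exact sequences of \emph{exact functors}, i.e.\ to constructions functorial on the whole category; your resolutions depend on object-by-object choices (generating subobject, projective cover), and $\ker(1-\phi)$, $\coker(1-\phi)$ are not exact in $(x,\phi)$, so ``applying the additivity theorem'' is not licensed there. This is precisely what the paper's route is designed to circumvent: it realizes $\cA[t]$ as the quotient $\cA'_{\gr}[2]/\cA'_{\gr,\nil}[2]$ (Proposition~\ref{prop:cannonical isom}), uses the resulting localization fibration sequence (Corollary~\ref{cor:canisom}), applies d\'evissage to the nilpotent part (Proposition~\ref{prop;devissage}), computes $K(\cA'_{\gr}[n])\cong\bbZ[\sigma]\otimes_{\bbZ}K(\cA)$ via Koszul homology and a characteristic filtration (Theorem~\ref{cor:canisomoftf}), and identifies the map from the nilpotent part with multiplication by $1-\sigma$ (Lemma~\ref{lem:commutative diagram}). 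Some graded/filtration device of this kind, giving functorial control over all of $\cA[t]$ at once, is what your sketch is missing.
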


\sn
\textbf{Conventions.} 
In this note, 
basically we follow the notation of 
exact categories for \cite{Kel90} and 
algebraic $K$-theory for 
\cite{Qui73} and \cite{Wal85}. 
For example, we call admissible monomorphisms 
(resp. 
admissible epimorphism and admissible short exact sequences) 
inflations (resp. deflations, conflations). 
We also call a category with cofibrations and weak equivalences 
a Waldhausen category. 
Let us denote the set of all natural numbers by $\bbN$. 
We consider it to be totally ordered set 
with the usual order. 
For a Waldhausen category, we write the specific zero object 
by the same letter $\ast$. 
Let us denote the $2$-category of essentially small categories by $\Cat$. 
For categories $\cX$, $\cY$, 
we denote the (large) category of 
functors from $\cX$ to $\cY$ by 
$\HOM(\cX,\cY)$ 
For any (left) noetherian ring $A$, 
we denote the category of finitely generated $A$-modules by $\cM_A$. 
Throughout the paper, 
we use the letter $\cA$ to denote an essentially small abelian category. 
For an object $x$ in $\cA$ and a finite family 
$\{x_i\}_{1\leq i\leq m}$ 
of subobjects of $x$, $\sum_{i=1}^m x_i$ means 
the minimal sub object of $x$ which contains all $x_i$. 
For an additive category $\cB$, we write the category of chain complexes 
on $\cB$ by $\Ch(\cB)$.

\sn
{\bf Acknowledgements.} 
The authors wish to express their deep gratitude 
to Marco Schlichting 
for instructing them in the proof of 
abstract Hilbert basis theorem \ref{thm:Abst Hilb basis}.

\section{Polynomial categories}
\label{sec:polycat}

In this section, let us recall the notation of 
polynomial abelian categories from \cite{Sch00} or \cite{Sch06}.

\subsection{End categories}

\begin{nt}
\label{nt:endcat}
For a category $\cC$, 
let us denote 
the {\bf category of endomorphisms} 
in $\cC$ by $\End \cC$. 
Namely, an object in $\End\cC$ 
is a pair $(x,\phi)$ 
consisting of an object $x$ in $\cC$ 
and morphism $\phi:x\to x$ and a morphism 
between $(x,\phi) \to (y,\psi)$ 
is a morphism 
$f:x \to y$ in $\cC$ such that 
$\psi f= f\phi$. 
For any functor $F:\cC \to \cC'$, 
we have the functor
$\End F:\End \cC \to \End \cC'$, 
$(x,\phi)\mapsto (Fx,F\phi)$. 
Moreover for any 
natural transformation $\theta:F\to F'$ 
between functors $F$, $F:\cC \to \cC'$, 
we have a natural transformation 
$\End \theta:\End F \to \End F'$ 
defined by the formula 
$\End \theta (x,\phi):=\theta(x)$ for any object $(x,\phi)$. 
This association gives a $2$-functor
$$\End:\Cat \to \Cat.$$
We have natural transformations 
$i:\id_{\Cat} \to \End$ and 
$U:\End \to \id_{\Cat}$ defined by 
$i(\cC):\cC \to \End \cC$, 
$x\mapsto (x,\id_x)$ and 
$U(\cC):\End \cC \to \cC$, 
$(x,\phi) \mapsto x$ for each category $\cC$.  
\end{nt}

\begin{rem}
\label{rem:limitinendcat}
Let $\cC$ be a category and 
$F:\cI \to \End \cC$, 
$i\mapsto (x_i,\phi_i)$ be a functor. 
Let us assume that 
there is a limit $\lim x_i$ 
(resp. colimit $\colim x_i$) in $\cC$. 
Then we have $\lim F_i=(\lim x_i,\lim \phi_i)$ 
(resp. $\colim F_i=(\colim x_i,\colim \phi_i)$.) 
In particular, 
if $\cC$ is additive (resp. abelian), 
then $\End \cC$ is also additive (resp. abelian). 
Moreover if $\cC$ is an exact category 
(resp. a category with cofibration), 
then $\End \cC$ naturally becomes an exact category 
(resp. a category with cofibration.) 
Here a sequence 
$(x,\phi) \to (y,\psi) \to (z,\xi)$ is a conflation
if and only if $x \to y \to z$ is a conflation in $\cC$. 
(resp. a morphism $(x,\phi) \onto{u} (y,\psi)$ is a cofibration 
if and only if 
$u:x\to y$ is a cofibration in $\cC$.) 
Moreover if $w$ is 
a class of morphisms in $\cC$ 
which satisfies the axioms of 
Waldhausen categories 
(and its dual), 
then the class of all morphisms 
in $\End \cC$ 
which is in $w$ also satisfies 
the axioms of 
Waldhausen categories (and its dual). 
\end{rem}

\begin{rem}
\label{rem:GMnotation}
In \cite[III. 5.15]{GM96}, 
for a category $\cC$, 
the category $\End \cC$ is called 
the {\bf polynomial category} over $\cC$ 
and denoted by $\cC[T]$. 
For let $A$ be 
a commutative ring with unit and 
$\Mod(A)$ the category of 
$A$-Modules, 
then we have the canonical category isomorphism 
$$\Mod(A[T]) \isoto (\Mod(A))[T],\ \ M \mapsto (M,T)$$
where $A[T]$ is 
the polynomial ring over $A$ 
and $T$ means the endomorphism 
$T:M \to M$, $x \mapsto Tx$. 
Moreover in general 
for any abelian category $\cA$, 
we have the equality 
$$\hdim \cA[T]=\hdim \cA +1$$
where $\hdim \cA$ is the 
{\bf homological dimension} 
of $\cA$ which is defined by 
$$\hdim \cA:=\max\{n; \Ext^{n}(x,y)\neq 0
 \ \ \ \text{for any objects $x$, $y$}\}.$$
But obviously for any left noetherian ring $A$, 
$(\cM_A)[T]$ and $\cM_{A[T]}$ are different categories. 
The main reasons is that 
$A[T]$ is not finitely generated as an $A$-module. 
In particular, 
the object $(A[T],T)$ is in $(\Mod(A))[T]$ 
but not in $(\cM_A)[T]$. 
In the subsection~\ref{subsec:Sch poly cat}, 
we define the noetherian polynomial categories 
over noetherian abelian category which is introduced by 
Schlichting in \cite{Sch06}. 
In the notion, we have the canonical category equivalence 
between $\cM_{A[t]}$ and $(\cM_A)[t]$. 
See \ref{ex:S-poly}.
\end{rem}

\subsection{Noetherian objects}
\label{subsec:Noeobj}

In this subsection, 
we develop 
the theory of noetherian objects 
in an exact categories 
which is slightly different from 
the usual notation in the category theory.

\begin{nt}
\label{nt:noetherianobj}
Let $\cE$ be an exact category 
and $x$ an object in $\cE$. 
We say $x$ is a {\bf noetherian object} if 
its every ascending filtration of admissible subobjects 
$$x_0\rinf x_1 \rinf x_2 \rinf \cdots$$
is stational. 
We say $\cE$ is a {\bf noetherian} category if every object 
in $\cE$ is noetherian. 
\end{nt}

\sn
We can easily prove the following lemmas.

\begin{lem}
\label{lem:noetherian}
Let $\cE$ be an exact category. 
Then we have the following. 
\begin{enumerate}
\enumidef
\item 
Let $x \rinf y \rdef z$ 
be a conflation in $\cE$. 
If $y$ is noetherian, then $x$ and $z$ are also.

\item 
For noetherian objects $x$, $y$ in $\cE$, 
$x\oplus y$ is also noetherian. 

\item
Moreover assume that $\cE$ is abelian, 
then the converse of 
$\mathrm{(1)}$ is true. 
Namely, in the notation $\mathrm{(1)}$, 
if $x$ and $z$ are noetherian, then $y$ is also. 
\end{enumerate}
\end{lem}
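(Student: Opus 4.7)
The plan is to reduce all three parts to the standard closure properties of admissible subobjects in an exact category: (a) composition of inflations, (b) pullback of an inflation along a deflation, and (c) cancellation (if $ji$ and $j$ are inflations, then so is $i$). For (1), given a conflation $x\rinf y\rdef z$ with $y$ noetherian, I would push any ascending chain of admissible subobjects of $x$ forward into $y$ by composition with $x\rinf y$, use noetherianity of $y$ to stabilize the pushed chain, and then invoke cancellation to deduce stabilization in $x$. For the quotient $z$, I would dually pull back any ascending chain $z_0\rinf z_1\rinf\cdots$ along $y\rdef z$ to obtain an ascending chain in $y$ which stabilizes; since every such pullback contains the common kernel $x$, stabilization transfers to the original chain via $z_n\cong y_n/x$.

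For (2), the split conflation $x\rinf x\oplus y\rdef y$ is the engine. Given an ascending chain $w_0\rinf w_1\rinf\cdots$ in $x\oplus y$, I would form two auxiliary chains: the pullbacks $w_n^x:=w_n\cap x\rinf x$ and the induced monomorphisms $w_n/w_n^x\rinf y$ obtained via the projection. Noetherianity of $x$ and $y$ stabilizes both auxiliary chains, and a short five-lemma applied to the conflation $w_n^x\rinf w_n\rdef w_n/w_n^x$ forces the original chain to stabilize. Part (3) is the same argument carried out honestly in the abelian category $\cA$: the intersections $y_n\cap x$ furnish the auxiliary chain in $x$, and the images $(y_n+x)/x$ furnish the auxiliary chain in $z\cong y/x$; the five-lemma on $y_n\cap x\rinf y_n\rdef y_n/(y_n\cap x)$ concludes.

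The main obstacle I anticipate is in (2), verifying that the induced map $w_n/w_n^x\rinf y$ is genuinely an inflation in the exact-category sense, not merely a monomorphism. This needs to be deduced from the splitting of $x\rinf x\oplus y\rdef y$ together with Noether's second isomorphism in exact categories, since one cannot freely take images in a general exact category. Apart from this bookkeeping point, the rest of the argument is a formal manipulation of the exact-category axioms, and in the abelian setting of (3) this subtlety disappears entirely.
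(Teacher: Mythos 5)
The paper itself states this lemma without proof (``We can easily prove the following lemmas''), so there is no official argument to compare with; judged on its own, your treatment of (1) and (3) is correct. For (1), pulling a chain in $z$ back along the deflation $y\rdef z$ does produce a chain of admissible subobjects of $y$ (pullback of an inflation along a deflation is an inflation), each sitting in a conflation $x\rinf y_n\rdef z_n$, and stabilization passes to the $z_n$ exactly as you say; the argument for $x$ is even simpler, since stabilization of the chain $x_0\rinf x_1\rinf\cdots$ is a property of the transition inflations themselves and is unchanged by composing with $x\rinf y$. Part (3) is the standard abelian-category argument with $y_n\cap x$ and $(y_n+x)/x$ and is fine.

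The genuine gap is in (2), and it occurs earlier than the point you flag. In an arbitrary exact category the object $w_n^x=w_n\cap x$ need not exist at all: it is the pullback of the inflation $w_n\rinf x\oplus y$ along the inflation $x\rinf x\oplus y$, equivalently the kernel of the composite $w_n\rinf x\oplus y\rdef y$, and the axioms only give pullbacks of deflations (dually pushouts of inflations); kernels, images and intersections of arbitrary morphisms or of pairs of admissible subobjects are not available (e.g.\ in a split exact structure on an additive category without kernels, two direct summands need not have an intersection in the category). Consequently the ``conflation'' $w_n^x\rinf w_n\rdef w_n/w_n^x$ that your five-lemma step is applied to is not defined, and the splitting of $x\rinf x\oplus y\rdef y$ does not rescue this: the composite $w_n\to y$ is still an arbitrary morphism, and any ``second isomorphism theorem'' in exact categories presupposes precisely the admissibility you would need to prove. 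So the issue is not merely whether $w_n/w_n^x\to y$ is an inflation, but whether the auxiliary chains exist. Your argument does establish (2) when $\cE$ is abelian (where it is just (3) for a split conflation), or more generally when $\cE$ is an extension-closed subcategory of an abelian category in which the relevant intersections and images stay in $\cE$ --- which covers every instance in which the paper actually invokes the lemma --- but for an arbitrary exact category as stated you need either such an additional hypothesis or a different argument for the split case.
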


\begin{lem}
\label{lem:faithfulexact}
For any exact faithful functor $F:\cA \to \cB$ between 
abelian categories and an object $x$ in $\cA$, 
if $Fx$ is noetherian, then $x$ is also noetherian. 
\end{lem}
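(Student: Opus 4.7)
The plan is to reduce the statement to the fact that a faithful exact functor between abelian categories reflects zero objects, and therefore reflects isomorphisms of monomorphisms. Given an ascending filtration $x_0 \rinf x_1 \rinf x_2 \rinf \cdots$ of subobjects of $x$ in $\cA$, I would apply $F$ to obtain a chain of monomorphisms $Fx_0 \rinf Fx_1 \rinf Fx_2 \rinf \cdots$ in $\cB$ (using exactness of $F$ to preserve monomorphisms); since these are all subobjects of $Fx$ and $Fx$ is noetherian, there exists $N$ such that for $n \geq N$ the inclusion $Fx_n \rinf Fx_{n+1}$ is an isomorphism. The goal is then to promote this stabilization back to $\cA$, concluding that $x_n \rinf x_{n+1}$ is an isomorphism for $n \geq N$.

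For the promotion step, I would form the cokernel $c_n := \coker(x_n \rinf x_{n+1})$ in $\cA$. Exactness of $F$ gives $Fc_n = \coker(Fx_n \rinf Fx_{n+1}) = 0$ since $Fx_n \rinf Fx_{n+1}$ is an isomorphism. The key observation is then that a faithful functor between abelian categories reflects zero objects: for any object $c$ with $Fc = 0$, the two morphisms $\id_c, 0_c : c \to c$ are both sent to the unique morphism $0 \to 0$, so faithfulness forces $\id_c = 0_c$ and hence $c \simeq 0$. Applying this to $c_n$ shows $c_n = 0$, so $x_n \rinf x_{n+1}$ is epi as well as mono, hence an isomorphism, and the original filtration is stational.

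There is essentially no obstacle: the only subtlety is ensuring that faithfulness together with exactness really does reflect the vanishing of an object, which reduces to the one-line argument above. Everything else is a direct transport of the noetherian hypothesis from $Fx$ back to $x$ via cokernels.
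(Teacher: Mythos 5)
Your proof is correct: applying $F$ to an ascending chain of subobjects of $x$, using that $Fx$ is noetherian, and then reflecting the vanishing of the cokernels back along the faithful exact functor is exactly the straightforward argument the paper has in mind (it states the lemma with the remark that it is easily proved and gives no details). No gaps — the one-line observation that a faithful functor reflects zero objects, combined with mono $+$ epi $=$ iso in an abelian category, completes the reduction.
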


\subsection{Schlichting polynomial category}
\label{subsec:Sch poly cat}

\begin{para}
\label{para:GQ-emmbeding}
For an exact category $\cE$, 
let us denote the category of left exact functors from 
$\cE^{\op}$ to the category of abelian groups $\Ab$ by 
$\Lex \cE$. 
The category $\Lex \cE$ is a Grothendieck abelian category and 
the Yoneda embedding $y:\cE \to \Lex \cE$ is exact and reflects 
exactness.
\end{para}

\begin{para}
\label{para:polynomial category}
For an object $a$ in an additive category $\cA$, 
let us define an object $a[t](=(a[t],t))$ in $\End \Lex\cA$ as follows. 
The underlying object $a[t]$ is 
$\displaystyle{\bigoplus_{n=0}^{\infty}}at^i$ 
where $at^i$ is a copy of $a$. 
The endomorphism $t:a[t] \to a[t]$ is defined by 
the identity morphisms $at^i \to at^{i+1}$ in each components. 
\end{para}

\sn
The following theorem is proved in \cite[9.10 b]{Sch00}

\begin{thm}[\bf Abstract Hilbert basis theorem]
\label{thm:Abst Hilb basis}
For any noetherian object $a$ in an abelian category $\cA$, 
$a[t]$ is also a noetherian object in $\End\Lex \cA$.
\end{thm}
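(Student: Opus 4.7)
The plan is to imitate the classical Hilbert basis argument via ``leading coefficients'', now carried out internally in the Grothendieck abelian category $\Lex \cA$. Unwinding the definitions, a monomorphism into $(a[t],t)$ in $\End \Lex \cA$ is the same as a $t$-stable subobject $N \subseteq a[t]$ in $\Lex \cA$, so I must show that every ascending chain $N_0 \subseteq N_1 \subseteq \cdots$ of such subobjects eventually stabilizes.

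First I would equip $a[t] = \bigoplus_{i \geq 0} a t^i$ with its evident finite-degree filtration $F_n a[t] := \bigoplus_{i \leq n} a t^i$, whose successive quotients $gr_n a[t] = a t^n$ are canonically isomorphic to $a$ and whose filtered colimit recovers $a[t]$. Setting $F_n N_k := N_k \cap F_n a[t]$ and $gr_n N_k := F_n N_k / F_{n-1} N_k$ in $\Lex \cA$, each $gr_n N_k$ sits as a subobject of $gr_n a[t] \cong a$. Because $t$ is an isomorphism $gr_n a[t] \isoto gr_{n+1} a[t]$ under these identifications and $N_k$ is $t$-stable, a direct check shows $gr_n N_k \subseteq gr_{n+1} N_k$ when both are viewed as subobjects of $a$.

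Next I apply the noetherian hypothesis on $a$ twice. For each fixed $k$, the chain $\{gr_n N_k\}_n$ stabilizes to some subobject $L_k \subseteq a$; the resulting chain $\{L_k\}_k$ in $a$ itself stabilizes to some $L$ for $k \geq K_1$; and the chain $\{gr_n N_{K_1}\}_n$ reaches $L$ at some stage $n = M$. Combining these, $gr_n N_k = L$ uniformly for all $n \geq M$ and $k \geq K_1$. Since $F_M a[t]$ is a finite direct sum of copies of $a$, Lemma~\ref{lem:noetherian}(2) makes it noetherian, hence the chain $\{F_M N_k\}_k$ of subobjects of $F_M a[t]$ stabilizes at some $k \geq K_2 \geq K_1$.

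Finally, an induction on $n \geq M$ using the conflations $F_{n-1} N_k \rinf F_n N_k \rdef gr_n N_k$ and the five-lemma yields $F_n N_k = F_n N_{K_2}$ for all $n \geq M$ and $k \geq K_2$; passing to the filtered colimit $N_k = \colim_n F_n N_k$ in $\Lex \cA$ gives $N_k = N_{K_2}$, as required. I expect the main obstacle to be calibrating noetherianness across the embedding $\cA \hookrightarrow \Lex \cA$: chains of subobjects of an object $a \in \cA$ are a priori richer when computed in $\Lex \cA$, so one must know that for noetherian $\cA$ the Yoneda embedding identifies $\cA$ with the full subcategory of noetherian objects of $\Lex \cA$, so that ascending chains of $\Lex \cA$-subobjects of $a$ and of $F_M a[t]$ are still controlled by the noetherian property of $\cA$. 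This is the standard bridge underlying the approach in \cite{Sch00}.
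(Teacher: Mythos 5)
Your write-up cannot be compared with an in-paper argument, because the paper does not prove Theorem~\ref{thm:Abst Hilb basis} at all: it simply quotes \cite[9.10 b]{Sch00}. What you give is the expected graded (``leading coefficient'') form of the Hilbert basis argument carried out in $\Lex\cA$, and it is essentially correct. The identification of subobjects of $(a[t],t)$ in $\End\Lex\cA$ with $t$-stable subobjects $N\subseteq a[t]$, the monotonicity $\gr_nN_k\subseteq\gr_{n+1}N_k$ inside $a$ coming from $t$-stability, the double stabilization (first in $n$ for each $k$, then in $k$), the stabilization of $\{F_MN_k\}_k$ using that $F_Ma[t]$ is a finite direct sum of copies of $a$ together with Lemma~\ref{lem:noetherian}~(2), the short-five-lemma induction over the conflations $F_{n-1}N_k\rinf F_nN_k\rdef\gr_nN_k$, and the final step $N_k=\colim_n\bigl(N_k\cap F_na[t]\bigr)$ are all sound, the last because $\Lex\cA$ is Grothendieck, hence AB5. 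The one point to tighten is the ``bridge'' you flag at the end: the theorem assumes only that the object $a$ is noetherian, not that $\cA$ is, so you should not invoke the identification of $\cA$ with the noetherian objects of $\Lex\cA$ (that presupposes $\cA$ noetherian), but the objectwise statement that for a noetherian $a$ the Yoneda embedding induces an isomorphism $\operatorname{Sub}_{\Lex\cA}(a)\cong\operatorname{Sub}_{\cA}(a)$. This is elementary: every object of $\Lex\cA$ is a filtered colimit of representables, so any subobject $S\subseteq a$ in $\Lex\cA$ is, by exactness of $y$ and AB5, a directed union of subobjects $y(a_i)$ with $a_i\subseteq a$ in $\cA$, and a directed family of subobjects of a noetherian object has a greatest element; this is in the same vein as \cite[5.8.8, 5.8.9]{Pop73}, which the paper itself uses in the proof of Lemma~\ref{lem:projectivity}. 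With that adjustment, the same remark justifies all three stabilizations (of $\{\gr_nN_k\}_n$, of $\{L_k\}_k$, and of $\{F_MN_k\}_k$), and your proof is complete and in the spirit of the cited source.
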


\begin{df}[\bf Schlichting polynomial category]
\label{df:S-poly cat def} 
Let us assume that $\cA$ is a noetherian abelian category and 
let us denote the full subcategory of 
noetherian objects 
in $\End \Lex\cA$ 
by $\cA[t]$ and 
call it the {\bf noetherian polynomial category} 
over $\cA$. 
By the virtue of \ref{lem:noetherian} and \ref{thm:Abst Hilb basis}, 
we learn that $\cA[t]$ is a noetherian abelian category. 
\end{df}

\begin{rem}
\label{rem:other df of S-poly cat}
We can easily prove that an object $x$ in $\End \Lex\cA$ is 
in $\cA[t]$ if and only if 
there is a deflation $a[t] \rdef x$ for some object 
$a$ in $\cA$.
\end{rem}

\begin{ex}
\label{ex:morphismofa[t]}
For any noetherian objects $a$, $b$ in $\cA$ and 
a morphism $f:a[t] \to b[t]$ in $\cA[t]$, 
there is a positive integer $m$ such that $f(a)$ is in 
$\displaystyle{\bigoplus_{i=1}^m bt^i}$. 
Since the morphism $f$ is recovered by the restriction 
$a \to a[t] \onto{f} b[t]$, 
$f$ is determined by 
morphisms $c_i:a \to b$ ($0\leq i\leq m$) in $\cA$. 
We write $f$ by $\displaystyle{\sum_{i=1}^mc_it^i}$.
\end{ex}

\begin{ex}
\label{ex:S-poly}
We have the category equivalence
$$\cM_{A[t]}\isoto (\cM_A)[t],\ \ M \mapsto (M,t).$$
\end{ex}

\begin{lem}
\label{lem:projectivity}
For any projective object $a$ in a noetherian abelian category $\cA$, 
$a[t]$ is a projective object in $\cA[t]$.
\end{lem}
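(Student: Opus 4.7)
The plan is to reduce projectivity of $a[t]$ in $\cA[t]$ to projectivity of the representable $y(a)$ in the Grothendieck category $\Lex\cA$. First compute the hom functor: since $a[t]=\bigoplus_{i\geq 0}y(a)t^i$ with endomorphism the shift, a morphism $f\colon(a[t],t)\to(Y,\psi)$ in $\End\Lex\cA$ decomposes into components $f_i\colon y(a)\to Y$, and the compatibility $\psi f=ft$ forces $f_{i+1}=\psi f_i$. Hence $f$ is determined by $f_0\in\Hom_{\Lex\cA}(y(a),Y)=Y(a)$, giving a natural isomorphism
\[
\Hom_{\cA[t]}(a[t],(Y,\psi))\;\cong\;\Hom_{\Lex\cA}(y(a),Y).
\]
The functor $\Hom_{\cA[t]}(a[t],-)$ therefore factors as $\Hom_{\Lex\cA}(y(a),-)\circ U$, where $U\colon\cA[t]\to\Lex\cA$, $(Y,\psi)\mapsto Y$, is exact by Remark~\ref{rem:limitinendcat}. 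It suffices to show that $y(a)$ is projective in $\Lex\cA$.

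For this I would invoke the standard identification of $\Lex\cA$ with the category of sheaves on $\cA$ for the topology whose covers are single deflations; epimorphisms in $\Lex\cA$ are then characterized by local surjectivity, namely $q\colon F\to G$ is epi iff every $\gamma\in G(x)$ lifts to $F$ after restriction along some deflation $\pi\colon x'\rdef x$ in $\cA$. Applying this with $x=a$ to $\alpha\in G(a)=\Hom_{\Lex\cA}(y(a),G)$ yields $\pi\colon a'\rdef a$ and $\beta'\in F(a')$ with $q(a')(\beta')=G(\pi)(\alpha)$. Since $a$ is projective in $\cA$, $\pi$ splits by some $s\colon a\to a'$, and setting $\beta:=F(s)(\beta')\in F(a)$ one has
\[
q(a)(\beta)=G(s)q(a')(\beta')=G(s)G(\pi)(\alpha)=G(\pi s)(\alpha)=\alpha.
\]
Thus $\alpha$ admits a preimage in $F(a)$, which shows $y(a)$ is projective in $\Lex\cA$; composed with the exact $U$ this gives exactness of $\Hom_{\cA[t]}(a[t],-)$ on $\cA[t]$.

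The only non-formal input is the locally-surjective characterization of epis in $\Lex\cA$, which is standard for Schlichting's left-exact envelope but is not explicitly recalled in the excerpt and is the single step requiring genuine categorical content; everything else is formal bookkeeping with the summand decomposition of $a[t]$.
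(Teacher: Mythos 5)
Your argument is correct, but it takes a genuinely different route from the paper. The paper proves the lifting property directly inside $\cA[t]$: given $f:a[t]\to y$ and an epimorphism $g:x\rdef y$, it uses that $x$ is a quotient of some $b[t]$ (Remark~\ref{rem:other df of S-poly cat}), cuts down to the finitely many degrees needed to cover $f(a)$, observes that the resulting objects $u=s(\bigoplus_{i\leq m}bt^i)$ and $v=g(u)$ are noetherian and hence lie in $\cA$ by Popescu, lifts $a\to v$ through $u\rdef v$ using projectivity of $a$ in $\cA$, and then extends to $a[t]\to x$ by freeness. You instead factor the corepresented functor, $\Hom_{\cA[t]}(a[t],-)\cong\Hom_{\Lex\cA}(y(a),-)\circ U$, and prove the stronger statement that $y(a)$ is projective in the Grothendieck category $\Lex\cA$, via the identification of $\Lex\cA$ with additive sheaves for the epi-topology and the local-surjectivity description of its epimorphisms, splitting the covering epi by projectivity of $a$. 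Your route buys independence from the noetherian machinery (it never needs Theorem~\ref{thm:Abst Hilb basis} or the Popescu citation, and in fact shows the corepresentable of $(a[t],t)$ is already exact on all of $\End\Lex\cA$), at the price of importing the sheaf-theoretic structure of $\Lex\cA$, which the paper treats only as a black box; that local-surjectivity fact is standard for the Gabriel--Popescu/left-exact envelope and can be sourced from \cite{Pop73} or \cite{Sch00}, but you should cite it explicitly. Two routine points you elide and should record: epimorphisms of $\cA[t]$ coincide with epimorphisms of $\End\Lex\cA$ because $\cA[t]$ is closed under quotients (Lemma~\ref{lem:noetherian}), which is what makes your restricted $U$ exact; and in the hom computation one should note that any $f_0$ does extend, by setting $f_i=\psi^i f_0$ on the $i$-th summand. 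Otherwise the bookkeeping ($f_{i+1}=\psi f_i$, the splitting $s$ of $\pi$, and the naturality computation $q(a)F(s)=G(s)q(a')$) is all correct.
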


\begin{proof}[\bf Proof]
let us consider the diagram in $\cA[t]$ below.
$$\footnotesize{\xymatrix{
a[t] \ar[rd]^f \ar@{-->}[d]_h \\
x \ar@{->>}[r]_{g} & y & .
}}$$ 
Since there is 
an epimorphism $a \rdef f(a)$ from 
a noetherian object in $\Lex \cA$, 
the object $f(a)$ in $\Lex \cA$ is noetherian. 
Therefore $f(a)$ is in $\cA$ by \cite[5.8.8, 5.8.9]{Pop73}. 
Since $x$ is an object in $\cA[t]$, 
there are an object $b$ in $\cA$ and 
a epimorphism $s:b[t] \rdef x$ in $\End\Lex \cA$. 
Then since $f(a)$ is a noetherian object in $\cA$, 
there is a positive integer $m$ such that 
$\displaystyle{\im(\bigoplus_{i=1}^mbt^i \to x \to y)}$ contains $f(a)$. 
We put $\displaystyle{u=s(\bigoplus_{i=1}^mbt^i)}$ and $v=g(u)$. 
Then $u$ and $v$ are noetherian objects in $\Lex\cA$ 
and therefore 
they are in $\cA$ by Ibid. 
We have the diagram in $\cA$ below. 
$$\footnotesize{\xymatrix{
a \ar[rd]^{f'} \ar@{-->}[d]_{h'} \\
u \ar@{->>}[r]_{g'} & v & .
}}$$ 
Therefore by projectivity of $a$, we have the dotted morphism $h':a \to u$ 
which makes the diagram above commutative. 
The composition $a \onto{h'} u \to x$ induces the desired morphism 
$h:a[t] \to x$ which makes the first diagram above commutative.
\end{proof}

\section{Graded categories}
\label{sec:graded cat}

In this section, we will introduce the notion of 
(noetherian) graded categories 
over categories and calculate the $K$-theory of 
noetherian graded categories 
over noetherian abelian categories.

\subsection{Fundamental properties of graded categories}
\label{subsec:grad cat}

\begin{para}
\label{para:cat<n>}
For a positive integer $n$, we define the category 
$<n>$ as follows. 
The class of objects of $<n>$ is just the set of all natural numbers 
$\bbN$. 
The class of morphisms of $<n>$ is generated by 
morphisms $\psi^{i}_m:m \to m+1$ 
for any $m$ in $\bbN$ and $1\leq i\leq n$ 
which subject to the equalities 
$\psi^i_{m+1}\psi^j_m=\psi^j_{m+1}\psi^i_m$ 
for each $m$ in $\bbN$ and $1\leq i,\ j \leq n$. 
\end{para}

\begin{df}[\bf Graded categories]
\label{df:graded cat}
For a positive integer $n$ and a category $\cC$, 
we put $\cC_{\gr}[n]:=\HOM(<n>,\cC)$ and call it 
the {\bf category of ($n$-)graded category over $\cC$}. 
For an object $x$, a morphism $f:x \to y$ in $\cC_{\gr}[n]$, 
we write $x(m)$, $x(\psi_m^i)$ and $f(m)$ 
by $x_m$, $\psi_m^{i,x}$ or shortly 
$\psi_m^i$ and $f_m$ respectively. 
\end{df}

\begin{rem}
\label{rem:limit in graded cat}
We can calculate a (co)limit in $\cC_{\gr}[n]$ by 
term-wise (co)limit in $\cC$. 
In particular, if 
$\cC$ is additive (resp. abelian) 
then $\cC_{\gr}[n]$ is also additive (resp. abelian). 
Moreover if 
$\cC$ is a category with cofibration 
(resp. an exact category), 
then $\cC_{\gr}[n]$ naturally becomes 
a category with cofibration 
(resp. an exact category). 
Here a sequence 
$x \to y \to z$ is a conflation 
(resp. a morphism $x \to y$ is a cofibration) 
if it is term-wisely in $\cC$. 
Moreover if $w$ is a class of morphisms in $\cC$ 
which satisfies the axioms of Waldhausen categories 
(and its dual), then 
 the class of all morphisms $lw$ in $\cC_{\gr}[n]$ 
those of morphisms $f$ such that $f_m$ is in $w$ for 
all natural number $m$ also satisfies the axioms 
of Waldhausen categories (and its dual).
\end{rem}

\begin{para}
\label{para:cC'df}
For an exact category $\cE$ and a positive integer $n$, 
we denote the full subcategory of all noetherian objects 
in $\cE_{\gr}[n]$ by $\cE_{\gr}'[n]$. 
In particular if $\cE$ is an abelian category then 
$\cE'_{\gr}[n]$ is a noetherian abelian category by \ref{lem:noetherian}. 
In the case, we call $\cE'_{\gr}[n]$ the 
{\bf noetherian ($n$-)graded category over $\cE$}. 
\end{para}

\begin{ex}
\label{ex:gradedcategory}
For a noetherian commutative ring with unit $A$ and 
$\cE=\cM_A$, 
$\cE'_{\gr}[n]$ is just the category of 
finitely generated graded $A[t_1,\cdots,t_n]$-modules.
\end{ex}

\begin{proof}[\bf Proof]
Let $\cF$ be a category of 
finitely generated graded $A[t_1,\cdots,t_n]$-modules. 
Any object $x$ in $\cF$ is considered to be an object in $\cE_{\gr}[n]$ 
in the following way. 
Let us define the functor $x':<n> \to \cE$ 
by $k \mapsto x_k$ and $(\psi^i:k\to k+1)\mapsto (t_i:x_k \to x_{k+1})$. 
The association $x \mapsto x'$ induces 
a category equivalence $\cF \isoto \cE'_{\gr}[n]$.
\end{proof}

\begin{nt}[\bf Degree shift]
\label{nt:degreeshiftofobjects}
Let $\cC$ be a pointed category and $k$ an integer. 
We define the functor 
$(k):\cC_{\gr}[n] \to \cC_{\gr}[n]$, $x \mapsto x(k)$. 
For any object $x$ and 
any morphism $f:x \to y$ in $\cC_{\gr}[n]$, 
we define the object $x(k)$ and 
the morphism $f(k):x(k) \to y(k)$ as follows. 
We put 
$$
x(k)_m=
\begin{cases}
x_{m+k} & \text{if $m\geq -k$}\\
0 & \text{if $m < -k$}
\end{cases},\ \ 
\psi^{i,x(k)}_m:=
\begin{cases}
\psi^{i,x}_{m+k} & \text{if $m\geq -k$}\\
0 & \text{if $m < -k$}
\end{cases}
\ \ \text{and}\ \ 
f(k)_m:=
\begin{cases}
f_{m+k} & \text{if $m\geq -k$}\\
0 & \text{if $m < -k$}
\end{cases}
.$$
For any object $x$ in $\cC_{\gr}[n]$ and a positive integer $k$, 
we have the canonical morphism $\psi_x^{i,k}(=\psi^i):x(-k) \to x(-k+1)$ 
defined by $\psi_{m-k}^i:{x(-k)}_m=x_{m-k} \to {x(-k+1)}_m=x_{m-k+1}$ 
for each $m$ in $\bbN$. 
\end{nt}

\begin{nt}
\label{nt:compoftranslation}
For any natural numbers $m$ and $k$, 
an object $x$ in $\cC_{\gr}[n]$ 
and a multi index $\ii=(i_1,\cdots,i_n)\in\bbN^n$, 
we define the morphism 
$\psi^{\ii,k}_x(=\psi^{\ii}):x(-(\sum_{j=1}^n i_j+k)) 
\to x(-k)$ by 
$$\psi^{\ii}={(\psi^n)}^{i_n}{(\psi^{n-1})}^{i_{n-1}}\cdots
{(\psi^2)}^{i_2}{(\psi^1)}^{i_1}.$$
\end{nt}

\begin{nt}[\bf Free graded object]
\label{nt:free graded object}
Let $\cC$ be an additive category 
and $n$ a positive integer. 
We define the functor 
$\cF_{\cC}[n](=\cF[n]):\cC \to \cC_{\gr}[n]$ 
in the following way. 
For any object $x$ in $\cC$, 
we define the object 
$\cF[n](x)=x[\{\psi^i\}_{1\leq i\leq m}]$ 
in $\cC_{\gr}[n]$ as follows. 
We put 
$$\cF[n](x)_m:=\underset{
\substack{\ii=(i_1,\cdots,i_n)\in\bbN^n \\ 
\sum^n_{j=1}i_j=m}}{\bigoplus} 
x_{\ii}$$ 
where $x_{\ii}$ is a copy of $x$. 
$x_{\ii}$ ($\displaystyle{\sum_{j=1}^n i_j=m}$) components of the morphisms 
$\psi_m^{k,\cF[n](x)}:\cF[n](x)_m \to \cF[n](x)_{m+1}$  
defined by  
$\id:x_{\ii} \to x_{\ii+\fe_k}$ 
where $\fe_k$ is the $k$-th unit vector. 
\end{nt}

\begin{para}
\label{para:can mor} 
Let $\cC$ be an additive category and $k$ a natural number. 
For any object $x$ in $\cC_{\gr}[n]$, 
we have the canonical morphism 
$\cF[n](x_k)(-k) \to x$ which is defined as follows.
For each $m\geq k$ and $\ii=(i_1,\cdots,i_n)\in\bbN^n$ 
such that $\displaystyle{\sum_{j=1}^n i_j=m-k}$, 
on the $x_{\ii}$ component of $\cF[n](x_k)(-k)_m$, 
the morphism is defined by 
$\psi_m^{\ii}:x_{\ii} \to x_m$. 
\end{para}

\begin{rem}
\label{rem:adjointnessofF[n]}
Let $\cC$ be an additive category. 
Then the functor $\cF[n]:\cC \to \cC_{\gr}[n]$ is 
the left adjoint functor of the functor 
$\cC_{\gr}[n] \to \cC$, $y \mapsto y_0$. 
Namely for any object $x$ in $\cC$ 
and any object $y$ in $\cC_{\gr}[n]$, we have 
the functorial isomorphism
$$\Hom_{\cC}(x,y_0)\isoto\Hom_{\cC_{\gr}[n]}(\cF[n](x),y),\ \ f \mapsto 
(\cF[n](x)\onto{\cF[n](f)}\cF[n](y_0) \to y).$$
\end{rem}

\begin{ex}
\label{ex:mor between freeobj} 
For any objects $x$, $y$ in an additive category $\cC$, 
a positive integer $k$,  
and family of morphisms 
$\{c_{\ii}\}_{\ii=(i_1,\cdot,i_n)\in\bbN^n,\ \sum i_j=k}$ 
from $x$ to $y$, 
we define the morphism $\sum c_{\ii}\psi^{\ii}:\cF[n](x)(-k) \to \cF[n](y)$ 
by $c_{\ii}:x_{\jj} \to x_{\jj+\ii}$ on its $x_{\jj}$ component to 
$x_{\jj+\ii}$ component. 
\end{ex}

\begin{lem}
\label{lem:fundamental facts about}
Let $\cA$ be a noetherian abelian category and 
$n$ a positive integer. 
We have the following assertions.
\begin{enumerate}
\enumidef
\item
For any object $x$ in $\cA$, 
$\cF[n](x)$ is a noetherian object in $\cA_{\gr}[n]$. 
In particular, 
we have the exact functor 
$$\cF_{\cA}[n]:\cA \to \cA'_{\gr}[n].$$
\item 
For any object $x$ in $\cA'_{\gr}[n]$, 
there is a natural number $m$ such that the canonical morphism 
as in \ref{para:can mor} 
$$\displaystyle{\overset{m}{\underset{k=0}{\bigoplus}}
\cF[n](x_k)(-k) \to x}$$ 
is an epimorphism. 
\item 
If $x$ is a projective object in $\cA$, 
then $\cF[n](x)$ is also a projective object in $\cA_{\gr}[n]$. 
\end{enumerate}
\end{lem}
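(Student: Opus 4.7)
I treat the three parts in turn. Part~(1), the noetherianity of $\cF[n](x)$, reduces iteratively to the abstract Hilbert basis theorem (Theorem~\ref{thm:Abst Hilb basis}); parts~(2) and~(3) are more direct.

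For part~(1), I proceed by induction on $n$. In the base case $n=1$, define the ``total direct sum'' functor
\[
  T_1 : \cA_{\gr}[1] \longrightarrow \End\Lex\cA,
  \quad (y_m, \psi_m)_{m \geq 0} \longmapsto \Bigl(\bigoplus_{m \geq 0} y_m,\, \sigma \Bigr),
\]
where $\sigma$ restricts on the $y_m$-summand to $\psi_m : y_m \to y_{m+1}$ and the coproduct is formed in the Grothendieck abelian category $\Lex\cA$. This $T_1$ is exact (conflations in $\cA_{\gr}[1]$ are termwise by Remark~\ref{rem:limit in graded cat}, and coproducts in a Grothendieck abelian category are exact) and faithful, the latter being immediate from $\bigoplus f_m = 0 \Rightarrow f_m = 0$. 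Since $T_1(\cF[1](x)) = x[t]$, Theorem~\ref{thm:Abst Hilb basis} combined with Lemma~\ref{lem:faithfulexact} gives noetherianity of $\cF[1](x)$. For general $n$, I extend $T_1$ inductively to an exact faithful functor $T_n : \cA_{\gr}[n] \to \End^n \Lex\cA$ into the abelian category of $\Lex\cA$-objects equipped with $n$ pairwise commuting endomorphisms, and prove noetherianity of $T_n(\cF[n](x)) = (\bigoplus_{\ii \in \bbN^n} x, \sigma^1, \ldots, \sigma^n)$ there by induction on $n$: one recognises $T_{n+1}(\cF[n+1](x))$ as a Schlichting polynomial-type extension of the (inductively noetherian) object $T_n(\cF[n](x))$ by the extra commuting shift $\sigma^{n+1}$, to which Theorem~\ref{thm:Abst Hilb basis} applies. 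Lemma~\ref{lem:faithfulexact} then reflects noetherianity back to $\cA_{\gr}[n]$. Exactness of $\cF_{\cA}[n]$ is then automatic, since each $\cF[n](x)_m$ is a finite direct sum of copies of $x$ and a conflation in $\cA$ produces termwise conflations (Remark~\ref{rem:limit in graded cat}).

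For part~(2), the canonical morphism of Paragraph~\ref{para:can mor} is termwise surjective: in degree $m$ the summand indexed by $k=m$, $\ii=0$ contributes the identity $y_m \to y_m$. So $\bigoplus_{k \geq 0} \cF[n](y_k)(-k) \to y$ is an epimorphism in $\cA_{\gr}[n]$. Letting $I_M \subseteq y$ denote the image of the truncation $\bigoplus_{k=0}^{M} \cF[n](y_k)(-k) \to y$, the $I_M$ form an ascending chain of admissible sub-objects of $y$ with union $y$; noetherianity of $y$ forces $I_m = y$ for some finite $m$. For part~(3), the adjunction of Remark~\ref{rem:adjointnessofF[n]} yields $\Hom_{\cA_{\gr}[n]}(\cF[n](x), -) \cong \Hom_{\cA}(x, (-)_0)$. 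Kernels, cokernels and coproducts in $\cA_{\gr}[n]$ are computed termwise (Remark~\ref{rem:limit in graded cat}), so the evaluation functor $(-)_0$ is exact; composing with exactness of $\Hom_{\cA}(x, -)$, which holds by projectivity of $x$, gives projectivity of $\cF[n](x)$.

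The technical crux is the inductive step in part~(1): proving that $T_n(\cF[n](x))$ is noetherian in $\End^n \Lex\cA$ via an iterated Hilbert-basis argument involving several commuting shifts. A naive attempt to factor through $(\End\Lex)^n \cA$ by iterated Yoneda is obstructed by the failure of Yoneda to preserve infinite coproducts, so the image cannot be identified cleanly with the iterated polynomial $x[t_1][t_2]\cdots[t_n]$. One must either argue directly within $\End^n \Lex\cA$ or construct a more delicate intermediate category in which $T_{n+1}(\cF[n+1](x))$ is realised exactly as a Schlichting polynomial over the noetherian object $T_n(\cF[n](x))$, so that Theorem~\ref{thm:Abst Hilb basis} can be applied in the abelian category $\End^n \Lex\cA$.
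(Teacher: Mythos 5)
Your parts (2) and (3), and the $n=1$ case of part (1), are correct and essentially identical to the paper's own arguments: the paper proves (2) by stabilizing the ascending chain of images $z_l=\im(\bigoplus_{k=0}^{l}\cF[n](x_k)(-k)\to x)$ using noetherianity of $x$ and then checking surjectivity degree by degree, and proves (3) by evaluating the lifting problem at degree $0$ and applying the adjunction of Remark~\ref{rem:adjointnessofF[n]}. For part (1) the paper uses exactly your functor: it defines $\Gamma:\cA_{\gr}[n]\to\End^n\Lex\cA$, $x\mapsto(\bigoplus_m x_m,\bigoplus\psi^1_m,\cdots,\bigoplus\psi^n_m)$, checks it is exact (coproducts in the Grothendieck category $\Lex\cA$ are exact) and faithful, identifies $\Gamma(\cF[n](x))$ with $x[t_1,\cdots,t_n]$, and reflects noetherianity back through Lemma~\ref{lem:faithfulexact}.

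The genuine gap is the one you flag and then leave open: you never prove that $T_n(\cF[n](x))=x[t_1,\cdots,t_n]$ is noetherian in $\End^n\Lex\cA$ when $n\geq 2$. Your final paragraph only lists two possible strategies (``argue directly within $\End^n\Lex\cA$'' or ``construct a more delicate intermediate category'') without carrying out either, so the induction on $n$ is not actually performed, and for $n\geq 2$ the proof of (1) is missing precisely where the content lies. The paper disposes of this point by invoking Theorem~\ref{thm:Abst Hilb basis} iteratively: $x[t_1,\cdots,t_n]$ together with its last shift is the polynomial object over the (inductively noetherian) object $x[t_1,\cdots,t_{n-1}]$ of the Grothendieck abelian category $\End^{n-1}\Lex\cA$, the relevant coproduct being formed internally in that category. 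To close your gap you therefore need the Hilbert basis statement in that internal form --- if $b$ is a noetherian object of a Grothendieck abelian category $\cB$, then $(\bigoplus_{k\geq 0}bt^k,\mathrm{shift})$ is noetherian in $\End\cB$ --- either by verifying that the proof of Theorem~\ref{thm:Abst Hilb basis} (that is, [Sch00, 9.10 b]) works in this generality, or by producing an exact faithful functor from (a suitable subcategory of) $\End\cB$ to $\End\Lex\cB_{\mathrm{noeth}}$ carrying this object to $b[t]$, so that Lemma~\ref{lem:faithfulexact} applies. Your remark that the Yoneda embedding need not preserve infinite coproducts correctly explains why the second route requires care, but identifying the obstruction is not the same as overcoming it; as written, part (1) for $n\geq 2$ remains unproved.
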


\begin{proof}[\bf Proof]
$\mathrm{(1)}$ 
We define the functor 
$$\Gamma:\cA_{\gr}[n] \to \End^n\Lex\cA,\ x\mapsto 
(\bigoplus x_{\ii},\ \bigoplus\psi_m^1,\cdots,\bigoplus \psi_m^n ) $$
where $\End^n$ means the $n$-times iteration of the functor $\End$. 
Since $\Lex \cA$ is Grothendieck abelian, 
the functor $\bigoplus$ is exact and 
therefore $\Gamma$ is an exact functor. 
Moreover for a morphism $f:x\to y$ in $\cA_{\gr}[n]$, 
the condition $\Gamma(f)=0$ obviously implies 
the condition $f=0$. 
Hence $\Gamma$ is faithful. 
We can easily check that for any object $x$ in $\cA$, 
$\Gamma(\cF[n](x))=x[t_1,\cdots,t_n]$ 
which is a noetherian object in $\End^n\Lex\cA$ by \ref{thm:Abst Hilb basis}. 
Therefore $\cF[n](x)$ is noetherian by \ref{lem:faithfulexact}. 

\sn
$\mathrm{(2)}$ 
We put $\displaystyle{z_l=\im(\bigoplus_{k=0}^l \cF[n](x_k)(-k) \to x)}$. 
Let us consider the ascending chain of subobjects in $x$
$$z_1\rinf z_2\rinf \cdots \rinf x.$$ 
Since $x$ is a noetherian object, 
there is a natural number $m$ such that 
$z_m=z_{m+1}=\cdots$. 
We claim that the canonical morphism 
$$y:=\bigoplus_{k=0}^i\cF[n](x_k)(-k) \to x$$
is an epimorphism. 
If $k\geq m$, $y_k \to x_k$ is obviously an epimorphism. 
If $k>m$, then we have the equalities
$$\im(y_k \to x_k)={(z_m)}_k={(z_k)}_k=x_k.$$ 
Therefore we get the desired result.

\sn
$\mathrm{(3)}$ 
Let us consider the left diagram in $\cA_{\gr}[n]$ below:
$$\footnotesize{\xymatrix{
\cF[n](x) \ar@{-->}[d]_{h} \ar[rd]^f \\ 
y \ar@{->>}[r]_g & z \ar[r] & 0
}\ \ \ \ 
\xymatrix{ 
x \ar@{-->}[d]_{h_0} \ar[rd]^{f_0} \\ 
y_0 \ar@{->>}[r]_{g_0} & z_0 \ar[r] & 0
}}$$
where $g$ is an epimorphism. 
Then we have the right diagram in $\cA$ above. 
By projectivity of $x$, we have the dotted morphism $h_0$ 
which makes the right diagram above commutative. 
Then by \ref{rem:adjointnessofF[n]}, 
we get $h:\cF[n](x) \to y$ 
which makes the left diagram above commutative. 
\end{proof}

\begin{df}[\bf Canonical filtration] 
\label{df:canfilt}
For any object $x$ in $\cA'_{\gr}[n]$, 
we define the canonical filtration $F_{\bullet}x$ as follows. 
$F_{-1}x=0$ 
and for any $m\geq0$,
$$(F_mx)_k=
\begin{cases}
x_k & \text{if $k\leq m$}\\
\displaystyle{\sum_{\substack{\ii=(i_1,\cdots,i_n)
\in\bbN^n\\\Sigma i_j =k-m}} 
\im\psi_{m}^{\ii}} & \text{if $k> m$}
\end{cases}.$$
\end{df}

\begin{rem}
\label{rem:dim of x}
Since every object $x$ in $\cA'_{\gr}[n]$ 
is noetherian, there is the minimal integer $m$ such that 
$F_mx=F_{m+1}x=\cdots$. 
In the case, 
we can easily prove that $F_mx=x$. 
We call $m$ {\bf degree} of $x$ and denote it by $\deg x$. 
\end{rem}

\subsection{Koszul homologies}
\label{subsec:Kos hom}

In this subsection, we define the Koszul homologies of 
objects in $\cA'_{\gr}[n]$ and as its application, 
we study the $K$-theory of $\cA'_{\gr}[n]$. 

\begin{nt}[\bf Koszul complex]
\label{nt:Koszul complex}
Let $\cC$ be an additive category and $n$ a positive integer. 
For any object $x$ in $\cC_{\gr}[n]$, 
we define the {\bf Koszul complex} $\kos(x)$ associated with $x$ 
as follows. 
For each $0\leq k\leq n$, we put 
$\displaystyle{\Kos(x)_k:=\bigoplus_{\substack{\ii=(i_1,\cdots,i_n)\in[1]^n \\ \sum i_j=k}} 
x_{\ii}}$ 
where $[1]$ is the totally ordered set 
$\{0,1\}$ with the natural order and 
$x_{\ii}$ is a copy of $x(-\sum_{j=1}^n i_j)$. 
The boundary morphism $d^{\Kos(x)}_k:\Kos(x)_k \to \Kos(x)_{k-1}$ is 
defined by $\displaystyle{(-1)^{\sum_{t=j+1}^n i_j}\psi_i:x_{\ii} \to x_{\ii-\fe_j}}$ on its $x_{\ii}$ to $x_{\ii-\fe_j}$ component where 
$\fe_j$ is the $j$-th unit vector. 
The association $x \mapsto \Kos(x)$ defines the exact functor 
$$\Kos:\cC_{\gr}[n] \to \Ch(\cC_{\gr}[n]).$$
\end{nt}

\begin{df}[\bf Koszul homologies]
\label{df:Koszul homologies}
Let $\cE$ be an idempotent complete exact category and $n$ a positive integer. 
We put $\cB:=\Lex \cE$. 
We define the family of functors $\{T_i:\cE_{\gr}[n] \to \cB_{\gr}[n]\}$ by 
$T_i(x):=\Homo_i(\Kos(x))$ for each $x$. 
$T_i(x)$ is said to be the {\bf $i$-th Koszul homology} of $x$. 
Let us notice that for any conflation $x \rinf y \rdef z$ in 
$\cE_{\gr}[n]$, we have a long exact sequences 
$$\cdots \to T_{i+1}(z) \to T_i(x) \to T_i(y) \to T_i(z) 
\to T_{i-1}(x) \to \cdots.$$
\end{df}

\begin{df}[\bf Torsion free objects]
\label{df:Torson free object}
An object $x$ in $\cA'_{\gr}[n]$ is {\bf torsion free} if 
$T_i(x)=0$ for any $i>0$. 
For each non-negative integer $m$, 
we denote the category of torsion free objects 
(of degree less than $m$) 
in $\cA'_{\gr}[n]$ by 
$\cA'_{\gr,\tf}[n]$ (resp. $\cA'_{\gr,\tf,m}[n]$). 
Since $\cA'_{\gr,\tf}[n]$, $\cA'_{\gr,\tf,m}[n]$ are 
closed under extensions in $\cA'_{\gr}[n]$, 
they become exact categories in the natural way. 
\end{df}

\begin{prop}
\label{prop:fun pro Kos hom}
For any objects $x$ in $\cA'_{\gr}[n]$ and $y$ in $\cA$, 
we have the following assertions.
\begin{enumerate}
\enumidef
\item 
For any natural number $k$, 
$\cF[n](y)(-k)$ is torsion free. 

\item 
For each positive integer $s$, 
the assertion 
$T_0(x)_k=0$ for any $k \leq s$ implies $x_k=0$ for any $k \leq s$. 

\item 
We have the equality 
$$T_0(F_px)_k=
\begin{cases}
0 & \text{if $k>p$}\\
T_0(x)_k & \text{if $k\leq p$}
\end{cases}
.$$

\item 
For each natural number $p$, 
there is a canonical epimorphism 
$$\alpha^p:\cF[n](T_0(x)_p)(-p) \rdef F_px/F_{p-1}x.$$

\item 
For each natural number $p$, 
$T_0(\alpha^p)$ is an isomorphism.

\item 
If $T_1(x)$ is trivial, then $\alpha^p$ is an isomorphism.
\end{enumerate}
\end{prop}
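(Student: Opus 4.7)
The plan is to verify the six assertions in sequence, using the faithful exact functor $\Gamma: \cA_{\gr}[n] \to \End^n\Lex\cA$ from the proof of Lemma \ref{lem:fundamental facts about}(1) to pass to the polynomial setting where convenient, and the adjointness of $\cF[n]$ from Remark \ref{rem:adjointnessofF[n]} for mapping-out arguments.

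For (1), observe that the Koszul complex of $\cF[n](y)(-k)$ is, up to the overall degree shift, the standard Koszul resolution of the free graded polynomial object $y[t_1,\ldots,t_n]$ with respect to its generators $t_1,\ldots,t_n$, and this resolution is acyclic in positive homological degrees. For (2), I would induct on $k$ using that the grading is indexed by $\bbN$ so $x_{-1} = 0$: the base case gives $T_0(x)_0 = x_0$, and the inductive step uses that if $x_{k-1} = 0$ then every $\psi^j_{k-1}$ is zero, whence $T_0(x)_k = x_k$. For (3), unwind the definition of the canonical filtration: for $k \leq p$ the object $F_p x$ agrees with $x$ near $k$ and the second formula is immediate; for $k > p$, every generator $\psi^{\ii}_p(a)$ of $(F_p x)_k$ factors as $\psi^j(\psi^{\ii - \fe_j}_p(a))$ from $(F_p x)_{k-1}$ for any $j$ with $i_j \geq 1$, so $T_0(F_p x)_k = 0$.

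For (4), the degree-shifted form of Remark \ref{rem:adjointnessofF[n]} supplies a natural bijection between morphisms $\cF[n](y)(-p) \to z$ and morphisms $y \to z_p$. Applying this with $y = T_0(x)_p$ and $z = F_p x / F_{p-1} x$ via the canonical identification $(F_p x / F_{p-1} x)_p = x_p / \sum_j \psi^j(x_{p-1}) = T_0(x)_p$ produces $\alpha^p$. Surjectivity is trivial in degrees $< p$ and $= p$; in degrees $k > p$ it holds because $(F_p x)_k$ is spanned by the images $\psi^{\ii}_p(x_p)$, which modulo $(F_{p-1} x)_k$ descend through $T_0(x)_p$. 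For (5), both source and target have $T_0$ concentrated in degree $p$: the source by (1) together with the explicit description of $\cF[n]$, the target by the same "generated-in-a-single-degree" argument used for (3) applied to $F_p x / F_{p-1} x$; in degree $p$, $T_0(\alpha^p)$ is the identity on $T_0(x)_p$ by construction.

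The hard step is (6). Set $K := \ker \alpha^p$. The long exact Koszul sequence of \ref{df:Koszul homologies} for $K \rinf \cF[n](T_0(x)_p)(-p) \rdef F_p x / F_{p-1} x$, combined with $T_1(\cF[n](T_0(x)_p)(-p)) = 0$ from (1) and the isomorphism in (5), collapses to a surjection $T_1(F_p x / F_{p-1} x) \rdef T_0(K)$; by (2), it then suffices to show $T_1(F_p x / F_{p-1} x) = 0$ whenever $T_1(x) = 0$. The main obstacle is exactly this vanishing, which I would attack by induction on $p$, using the long exact Koszul sequence of the conflation $F_{p-1} x \rinf F_p x \rdef F_p x / F_{p-1} x$ to propagate the hypothesis through the filtration. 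Informally, $T_1(x) = 0$ says that the family $(\psi^1, \ldots, \psi^n)$ is regular on $x$, and the inductive step must establish that this regularity is inherited by each successive quotient generated in a single degree — this is the multi-index generalization of the simple $n=1$ observation that $t$ acts injectively on $x$ forces $t$ to act injectively on every $F_p x / F_{p-1} x$.
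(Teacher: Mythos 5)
Your handling of (1)--(5) is essentially the paper's: (2), (3), (4), (5) are the same direct computations with the adjunction of \ref{rem:adjointnessofF[n]}, and for (1) the paper argues slightly more carefully than you do, reducing to $\cA=\cP_{\bbZ}$, $y=\bbZ$, where acyclicity of the Koszul complex on the regular sequence $t_1,\dots,t_n$ is classical, and then transporting the result along the exact functor $\cP_{\bbZ}\to\cA$, $\bbZ\mapsto y$; your bare assertion that the Koszul complex of $\cF[n](y)$ is acyclic in positive degrees is the same fact, but stated without justification in an abstract abelian category.

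The genuine gap is in (6). You correctly reduce, via the conflation $K\rinf\cF[n](T_0(x)_p)(-p)\rdef F_px/F_{p-1}x$ together with (1), (2) and (5), to the vanishing $T_1(F_px/F_{p-1}x)=0$ under the hypothesis $T_1(x)=0$; but you then only announce an ``induction on $p$'' whose inductive step ``must establish'' the inherited regularity, and that step --- which is the whole content of (6) --- is never carried out. Note that the naive ascending induction you seem to have in mind does not get started: $T_1$ of the subobject $F_0x$ is not controlled by $T_1(x)$, since the long exact sequence only gives $T_2(x/F_0x)\to T_1(F_0x)\to T_1(x)$ and nothing kills the left-hand term. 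The paper instead runs a \emph{descending} induction, starting at $p\gg 0$ where $F_px=x$ and hence $T_1(F_px)=T_1(x)=0$, and proving simultaneously $T_1(F_px)=0$ and $T_1(F_px/F_{p-1}x)=0$. In the step one uses the conflation $F_{p-1}x\rinf F_px\rdef F_px/F_{p-1}x$: by (3) the map $T_0(F_{p-1}x)\to T_0(F_px)$ is a monomorphism, so $T_1(F_px)=0$ forces $T_1(F_px/F_{p-1}x)=0$, hence $T_0(K^p)=0$ and $K^p=0$ by (2), i.e.\ $\alpha^p$ is an isomorphism; then $T_2(F_px/F_{p-1}x)\cong T_2(\cF[n](T_0(x)_p)(-p))=0$ by (1), and feeding this back into the same long exact sequence yields $T_1(F_{p-1}x)=0$, which is what allows the descent to continue. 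Without this mechanism --- in particular without invoking (3) to kill the connecting map and without the bootstrap through $T_2$ once $\alpha^p$ is known to be an isomorphism --- your sketch of (6) remains an unproved assertion rather than a proof.
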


\begin{proof}[\bf Proof]
$\mathrm{(1)}$ Since the degree shift functor is exact, 
we have the equality $T_i(x(-k))=T_i(x)(-k)$ 
for any natural numbers 
$i$, $k$. 
Therefore we shall just check that 
$\cF[n](y)$ is torsion free. 
If $\cA$ is the category of finitely generated free $\bbZ$-modules 
$\cP_{\bbZ}$ and $y=\bbZ$, 
then $\cF[n](y)$ is just the $n$-th polynomial 
ring over $\bbZ$, $\bbZ[t_1,\cdots,t_n]$ 
and $T_i(\cF[n](y))$ is the $i$-th homology group of 
the Koszul complex associated with the regular sequence 
$t_1,\cdots,t_n$. 
In the case, it is well-known that 
$T_i(\cF[n](y))=0$ for $i>0$. 
For general $\cA$ and $y$, 
there is the exact functor 
$\cP_{\bbZ} \to \cA$, $\bbZ \mapsto y$ 
which induces $\Ch({(\cP_{\bbZ})}'_{\gr}[n]) \to \Ch(\cA'_{\gr}[n])$ 
and $\Kos(\cF[n](\bbZ))$ goes to $\Kos(\cF[n](y))$ by 
the exact functor. 
Hence we learn that $T_i(\cF[n](y))=0$ for $i>0$. 

\sn
$\mathrm{(2)}$ 
First notice that we have the equalities 
$${T_0(x)}_k=
\begin{cases}
x_0 & \text{if $k=0$}\\
x_k/\im(\psi^1,\cdots,\psi^n) & \text{if $k>0$}
\end{cases}
.
$$
Therefore if 
${T_0(x)}_k=0$ for $k\leq s$, then we have 
$x_0=0$ and $x_k=\im(\psi^1,\cdots,\psi^n)$ for $k\leq s$. 
Hence inductively we notice that $x_k=0$ for $k\leq s$.

\sn
The assertion 
$\mathrm{(3)}$ follows from direct calculation.

\sn
$\mathrm{(4)}$ 
We have the equality 
$$
{(F_px/F_{p-1}x)}_k \isoto
\begin{cases}
0 & \text{if $k < p $}\\
x_p/\im(\psi^1,\cdots,\psi^n)={T_0(x)}_p & \text{if $k=p$}
\end{cases}
.
$$
Therefore by \ref{rem:adjointnessofF[n]}, 
we  have the canonical morphism
$$\alpha^p:\cF[n]({T_0(x)}_p)(-p) \to ((F_px/F_{p-1}x)(p))(-p)=F_px/F_{p-1}x.$$
One can easily check that the morphism is an epimorphism. 

\sn
$\mathrm{(5)}$ 
By $\mathrm{(1)}$, 
we have the equalities 
$$
{F_px/F_{p-1}x}\isoto{T_0(\cF[n]({T_0(x)}_p)(-p))}_k \isoto
\begin{cases}
0 & \text{if $k\neq p $}\\
x_p/\im(\psi^1,\cdots,\psi^n) & \text{if $k=p$}
\end{cases}
$$
and ${T_0(\alpha^p)}_p=\id$. 
Hence we get the assertion. 

\sn
$\mathrm{(6)}$ 
Let $K^p$ be the kernel of $\alpha^p$, 
we have short exact sequences 
$$K^p \rinf \cF[n]({T_0(x)}_p)(-p) \rdef F_px/F_{p-1}x,$$
$$F_{p-1}x \rinf F_p x \rdef F_px/F_{p-1}x.$$
We call the long exact sequences of Koszul homologies 
associated with short sequences above $\mathrm{(I)}$, 
$\mathrm{(II)}$ respectively. 
By $\mathrm{(I)}$ 
and the assertions 
$\mathrm{(1)}$ and $\mathrm{(5)}$, 
we have the isomorphism 
$$T_1(F_px/F_{p-1}x)\isoto T_0(K^p).$$ 
We claim that the following assertion.

\begin{claim}
$T_1(F_px/F_{p-1}x)=0$ and $T_1(F_px)=0$.
\end{claim}

\sn
We prove the claim by descending induction of $p$. 
For sufficiently large $p$, we have $T_1(F_px)=T_1(x)$ 
and therefore it is trivial by the assumption. 
Then by $\mathrm{(II)}$ and $\mathrm{(3)}$, 
we have 
$$T_0(K^p)=T_1(F_px/F_{p-1}x)=0.$$ 
Therefore by $\mathrm{(2)}$, 
we have $K^p=0$. 
By $\mathrm{(I)}$ and $\mathrm{(1)}$, 
we have isomorphisms 
$$0=T_2(\cF[n]({T_0(x)}_p)(-p))
\isoto T_2(F_px/F_{p-1}x).$$ 
By $\mathrm{(II)}$, 
we get $T_1(F_{p-1}x)=0$. 
Hence we prove the claim and by $\mathrm{(2)}$, 
we get the desired result. 
\end{proof}

\begin{thm}
\label{cor:canisomoftf}
We have the canonical isomorphism
$$\bbZ[\sigma]\otimes_{\bbZ}K(\cA)\isoto K(\cA'_{\gr}[n])$$
which makes the diagram below commutative for any 
natural number $k$:
$$\footnotesize{
\xymatrix{
K(\cA) \ar[d]_{\sigma^k} \ar[rd]^{K(\cF[n](-k))}\\
\bbZ[\sigma]\otimes_{\bbZ}K(\cA) \ar[r]_{\sim} & K(\cA'_{\gr}[n]) & .
}}$$
\end{thm}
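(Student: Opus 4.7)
The plan is to first apply Quillen's resolution theorem to replace $K(\cA'_{\gr}[n])$ by $K(\cA'_{\gr,\tf}[n])$, then to stratify the torsion-free subcategory by degree and induct using Waldhausen's additivity theorem. The isomorphism will send $\sigma^k\otimes[y]$ to $[\cF[n](y)(-k)]$, which makes the compatibility diagram automatic.

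For the reduction, $\cA'_{\gr,\tf}[n]$ is closed under extensions and kernels of deflations in $\cA'_{\gr}[n]$ by the long exact sequence of Koszul homologies. To build a finite torsion-free resolution of $x\in\cA'_{\gr}[n]$, I iterate the canonical epimorphism of Lemma~\ref{lem:fundamental facts about}(2): set $K_{-1}:=x$, let $y_j:=\bigoplus_{k}\cF[n]((K_{j-1})_k)(-k)\rdef K_{j-1}$, and let $K_j$ be its kernel. Each $y_j$ is torsion free by Proposition~\ref{prop:fun pro Kos hom}(1), and since the Koszul complex has length $n$ we have $T_k(x)=0$ for $k>n$. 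The Koszul long exact sequence inductively yields $T_i(K_j)\cong T_{i+j+1}(x)$ for $i\geq 1$, so $K_{n-1}$ is torsion free and one obtains a resolution of length $n$. Quillen's resolution theorem then gives $K(\cA'_{\gr,\tf}[n])\isoto K(\cA'_{\gr}[n])$.

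Setting $\cC_m:=\cA'_{\gr,\tf,m+1}[n]$, we have $\cA'_{\gr,\tf}[n]=\bigcup_m \cC_m$ and $K(\cA'_{\gr,\tf}[n])=\colim_m K(\cC_m)$ by continuity of $K$-theory. I will show by induction on $m$ that $K(\cC_m)\cong\bigoplus_{k=0}^m K(\cA)\cdot\sigma^k$, with the $\sigma^k$ summand induced by $\cF[n](-)(-k):\cA\to\cC_m$; the base case $\cC_0\simeq \cA$ uses Proposition~\ref{prop:fun pro Kos hom}(6) (with $T_1=0$ forcing $\alpha^0$ to be an isomorphism). The inductive step applies Waldhausen's additivity theorem to the natural conflation $F_{m-1}x\rinf x\rdef \cF[n](T_0(x)_m)(-m)$ of Proposition~\ref{prop:fun pro Kos hom}(4)--(6), combined with the elementary identities $F_{m-1}\circ i_1=\id_{\cC_{m-1}}$, $T_0(\cF[n](-)(-m))_m=\id_\cA$, $F_{m-1}\circ\cF[n](-)(-m)=0$, and $T_0(y)_m=0$ for $y\in\cC_{m-1}$. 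For additivity to apply, all three functors in the conflation must be exact on $\cC_m$; the only nontrivial point is exactness of $T_0(-)_m:\cC_m\to\cA$: for a conflation $x\rinf y\rdef z$ in $\cC_m$ the Koszul long exact sequence with $T_1(z)=0$ produces a short exact sequence in $\Lex\cA$ at degree $m$, which lives in $\cA$ because the Yoneda embedding is exact and reflects exactness. Consequently $\cF[n](T_0(-)_m)(-m)$ is exact and $F_{m-1}(-)$ is exact as the kernel of the objectwise-deflation $\id\rdef\cF[n](T_0(-)_m)(-m)$ between exact functors.

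The main obstacle is precisely this exactness of $F_{m-1}$ on $\cC_m$: the canonical filtration functor is not exact on all of $\cA'_{\gr}[n]$, and the torsion-free hypothesis (specifically $T_1(z)=0$) is exactly what makes $T_0(-)_m$ exact and thereby realizes $F_{m-1}$ as the kernel of a deflation between exact functors. Once this is in place, additivity, the colimit passage in $m$, and the resolution theorem assemble into the claimed isomorphism $\bbZ[\sigma]\otimes_\bbZ K(\cA)\isoto K(\cA'_{\gr}[n])$ with the stated compatibility.
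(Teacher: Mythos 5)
Your proposal is correct and follows essentially the same route as the paper: reduce to the torsion-free subcategory via Quillen's resolution theorem (using \ref{lem:fundamental facts about}(2) and \ref{prop:fun pro Kos hom}(1)), split off one degree at a time using the canonical filtration together with \ref{prop:fun pro Kos hom}(6) and the additivity theorem, and pass to the colimit over the degree bound. The only differences are presentational: you induct on degree with the two-term conflation $F_{m-1}x \rinf x \rdef \cF[n](T_0(x)_m)(-m)$ rather than applying the filtration form of additivity to the paper's pair of functors $a,b$ in one shot, and you make explicit two points the paper leaves implicit, namely the syzygy argument giving finite torsion-free resolutions and the exactness of $T_0(-)_m$ and $F_{m-1}$ on the torsion-free category.
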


\begin{proof}[\bf Proof]
The inclusion functor 
$\cA'_{\gr,\tf}[n] \rinc \cA'_{\gr}[n]$ induce the isomorphism 
on their $K$-theories by 
\ref{lem:fundamental facts about} $\mathrm{(2)}$, 
\ref{prop:fun pro Kos hom} $\mathrm{(1)}$ and 
Corollary~3 of the resolution theorem in \cite{Qui73}. 
For each $m$, 
there are exact functors 
$$a:\cA'_{\gr,\tf,m}[n] \to \cA^{\times m+1},\ 
x \mapsto (T_0(x)_k)_{0\leq k\leq m},$$ 
$$b:\cA^{\times m+1} \to \cA'_{\gr,\tf,m}[n],\ 
{(x_k)}_{0\leq k\leq m} \mapsto \bigoplus_{k=0}^{m} \cF[n](x_k)(-k) .$$
Obviously $ab$ induces the identity map on their $K$-theories. 
On the other hand, 
any $x$ in $\cA'_{\gr,\tf,m}[n]$ has 
an exact characteristic filtration $F_{\bullet}x$ 
with $F_px/F_{p-1}x\isoto \cF[n]({T_0(x)}_p)(-p)$ 
by \ref{prop:fun pro Kos hom} $\mathrm{(6)}$, 
so applying Corollary~2 of 
the additivity theorem in \cite{Qui73}, 
we learn that $ba$ also induces the identity map 
on their $K$-theories. 
Therefore we have 
the isomorphism 
$$K(\cA'_{\gr,\tf,m}[n])\isoto 
\displaystyle{\bigoplus_{i=0}^m}K(\cA)\sigma^i.$$ 
Finally by taking the inductive limit, we get the desired isomorphism. 
\end{proof}

\section{Main theorem}
\label{sec:Main thm}

In this section, 
let us fix an essentially small noetherian abelian category $\cA$ which 
has enough projective objects. 
There is an exact functor $-\otimes_{\cA}\bbZ[t]$ from $\cA$ to $\cA[t]$, $a \mapsto a[t]$. 
The purpose of this section is to study the induced map 
from $-\otimes_{\cA}\bbZ[t]$ on $K$-theory. 
More precisely, we will prove the following theorem.

\begin{thm}
\label{thm:main thm}
The functor $-\otimes_{\cA} \bbZ[t]:\cA \to \cA[t]$ induces the 
isomorphism on their $K$-theories
$$K(\cA)\isoto K(\cA[t]).$$
\end{thm}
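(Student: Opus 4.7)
The plan is to reduce the theorem to the $K$-theoretic computation of the noetherian graded category in Theorem~\ref{cor:canisomoftf}, $K(\cA'_{\gr}[1]) \cong \bbZ[\sigma]\otimes_{\bbZ} K(\cA)$, by constructing a ``forget grading'' comparison functor. Specifically, I would define an exact functor $U : \cA'_{\gr}[1] \to \cA[t]$ sending $x = ((x_m)_m,(\psi^1_m)_m)$ to $\bigl(\bigoplus_{m\geq 0} x_m,\bigoplus_m \psi^1_m\bigr) \in \End \Lex \cA$. By Lemma~\ref{lem:fundamental facts about}(2) the noetherian object $x$ admits a finite cover $\bigoplus_{k=0}^{N}\cF[1](x_k)(-k)\twoheadrightarrow x$, whose image under $U$ is $\bigoplus_k x_k[t]\in\cA[t]$, so $U(x)\in\cA[t]$. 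Exactness of $U$ is inherited from the termwise exactness in $\Lex\cA$, and $U\circ \cF[1]$ visibly equals the base-change functor $F$, giving $K(F) = K(U)\circ K(\cF[1])$.

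Next I would observe the canonical reindexing isomorphism $U(x(-1)) \cong U(x)$ in $\cA[t]$: both carry the same underlying object $\bigoplus_n x_n$ and the same endomorphism after relabelling $n \leftrightarrow n-1$. Consequently $K(U)(\sigma\alpha) = K(U)(\alpha)$ for every $\alpha \in K(\cA'_{\gr}[1])$, so $K(U)$ factors through the coinvariants $K(\cA'_{\gr}[1])/(\sigma-1)$, which by Theorem~\ref{cor:canisomoftf} is canonically identified with $K(\cA)$. The resulting induced map $\overline{K(U)}: K(\cA)\to K(\cA[t])$ coincides with $K(F)$, so it suffices to show $\overline{K(U)}$ is an isomorphism.

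For this I would mirror the Koszul machinery of Section~\ref{subsec:Kos hom} in the ungraded setting. For $(M,\phi)\in\cA[t]$ the Koszul complex becomes the two-term complex $M\xrightarrow{\phi} M$, with homology $M/\phi M$ and $\ker\phi$ landing in the full subcategory $\{(a,0):a\in\cA\}\simeq\cA\subset\cA[t]$, and the basic short exact sequence $0\to(a[t],t)\xrightarrow{t}(a[t],t)\to(a,0)\to 0$ plays the role of the Koszul resolution. Combined with Lemma~\ref{lem:projectivity}, which lifts enough projectives from $\cA$ to $\cA[t]$ as objects $a[t]$ with $a$ projective in $\cA$, I would construct a canonical filtration $F_\bullet M$ in $\cA[t]$ whose associated graded pieces have the form $(a_k[t],t)$ for $a_k$ projective in $\cA$. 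Waldhausen additivity then yields surjectivity of $\overline{K(U)}$; injectivity follows by exhibiting a left inverse through the derived residue functor $(M,\phi)\mapsto M/\phi M$, which on the image of $F$ recovers the identity since $a[t]/t\cdot a[t]=a$.

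The main obstacle will be establishing that the canonical filtration $F_\bullet M$ has finite length, an abstract Hilbert syzygy theorem for $\cA[t]$ assuming only enough projectives in $\cA$. In the graded setting (Remark~\ref{rem:dim of x}) this finite length follows trivially from the degree invariant of a noetherian graded object, but in $\cA[t]$ the analogous invariant is the $t$-adic depth, whose termination requires an ascending-chain argument on $t$-stable subobjects in the spirit of Proposition~\ref{prop:fun pro Kos hom}(6). Once this termination is in place, the proof closes in the same shape as Theorem~\ref{cor:canisomoftf}.
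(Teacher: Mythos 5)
Your reduction framework is mostly sound up to a point: the total-object functor $U:\cA'_{\gr}[1]\to\cA[t]$ is well defined and exact, $U\circ\cF[1]$ is the base change functor, the reindexing isomorphism $U(x(-1))\cong U(x)$ does force $K(U)$ to factor through the coinvariants of $\sigma$, and your split-injectivity argument via the two-term Koszul complex $M\onto{\phi}M$ can be made rigorous (though it needs $K$-theory of complexes/Gillet--Waldhausen input that the paper never invokes). The genuine gap is the surjectivity step. The ``canonical filtration $F_\bullet M$ whose associated graded pieces have the form $(a_k[t],t)$ with $a_k$ projective'' does not exist in general: take $\cA=\cM_{\bbZ}$, so $\cA[t]\simeq\cM_{\bbZ[t]}$ by Example~\ref{ex:S-poly}, and $M=(\bbZ,0)=\bbZ[t]/(t)$; since $t$ kills $M$, it contains no nonzero subobject isomorphic to any $(a[t],t)$ (on which $t$ acts injectively), so no such filtration can even begin. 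If you instead mean a finite resolution by extended objects $(a[t],t)$ (your sequence $0\to(a[t],t)\onto{t}(a[t],t)\to(a,0)\to0$ suggests this), then finiteness is precisely what fails under the paper's hypotheses: $\cA$ is only assumed noetherian with enough projectives, not of finite homological dimension, so the syzygies of a cover $b[t]\rdef M$ need not become extended after finitely many steps, and Quillen's resolution theorem does not apply because the extended objects are not closed under extensions or kernels of epimorphisms. So the ``abstract Hilbert syzygy theorem'' you defer to is not a technical obstacle to be filled in later; it is unavailable in this generality, and with it goes the surjectivity of $K(-\otimes_{\cA}\bbZ[t])$, which is the actual content of the theorem.

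For contrast, the paper never filters or resolves objects of $\cA[t]$ at all. It realizes $\cA[t]$ as a quotient abelian category $\cA'_{\gr}[2]/\cA'_{\gr,\nil}[2]$ (Proposition~\ref{prop:cannonical isom}, where the enough-projectives hypothesis is used), obtains the fibration sequence $K(\cA'_{\gr,\nil}[2])\to K(\cA'_{\gr}[2])\to K(\cA[t])$ by localization (Corollary~\ref{cor:canisom}), identifies $K(\cA'_{\gr,\nil}[2])$ with $\bbZ[\sigma]\otimes_{\bbZ}K(\cA)$ by d\'evissage (Proposition~\ref{prop;devissage}), and computes the induced map as $(1-\sigma)\otimes\id$ by additivity (Lemma~\ref{lem:commutative diagram}); the long exact sequence then gives injectivity and surjectivity simultaneously, with Theorem~\ref{cor:canisomoftf} supplying the graded computation. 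To salvage your route you would have to prove that extended objects generate $K_*(\cA[t])$, and the only mechanism the paper's hypotheses support for that is exactly this localization--d\'evissage--additivity argument, so you should replace your filtration step by it rather than by a syzygy-type finiteness claim.
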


\subsection{Nilpotent objects in $\cA'_{\gr}[2]$}
\label{subsec:Nilp}

In this subsection, 
we will define the category $\cA'_{\gr,\nil}[2]$ 
of nilpotent objects 
in $\cA'_{\gr}[2]$. 
We also study the relationship $\cA'_{\gr}[2]$ with 
$\cA[t]$ and calculate the $K$-theory of $\cA'_{\gr,\nil}[2]$. 
For simplicity in this subsection we write $\psi=\psi^1$ and $\phi=\psi^2$ 
and for any object $x$ in $\cA$, we write $\cF[2](x)$ by $x[\psi,\phi]$.

\begin{df}
\label{df:nilpotent obj}
An object $x$ in $\cA'_{\gr}[2]$ is ({\bf $\psi$-}) {\bf nilpotent} 
if there is an integer $n$ such that 
$$\psi^{n,k}_x=0$$ 
for any non-negative integer $k$.
We write the full subcategory of 
nilpotent objects in $\cA'_{\gr}[2]$ by 
$\cA'_{\gr,\nil}[2]$. 
\end{df}

\begin{lem}
\label{lem:niliSerresubcat}
The category $\cA'_{\gr,\nil}[2]$ 
is a Serre subcategory of $\cA'_{\gr}[2]$. 
In particular $\cA'_{\gr,\nil}[2]$ 
is an abelian category.
\end{lem}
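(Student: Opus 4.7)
The plan is to verify the three Serre-subcategory axioms (closure under subobjects, quotients, and extensions) for $\cA'_{\gr,\nil}[2]$ inside $\cA'_{\gr}[2]$ by straightforward diagram chases in $\cA$, using naturality of $\psi = \psi^1$. Unwinding the definition, an object $y$ is nilpotent if and only if there exists an integer $n$ such that for every $m \in \bbN$ the iterated structure map $\psi^n : y_m \to y_{m+n}$ vanishes. Throughout I would fix a conflation $x \rinf y \rdef z$ in $\cA'_{\gr}[2]$.

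First I would handle subobjects and quotients simultaneously, assuming $y$ is nilpotent with bound $n$. By naturality of $\psi$ with respect to the monomorphism $x \rinf y$, the composite $x_m \to x_{m+n} \rinf y_{m+n}$ equals $x_m \rinf y_m \xrightarrow{\psi^n} y_{m+n}$, and since the latter is zero while $x_{m+n} \rinf y_{m+n}$ is monic, we conclude $\psi^n = 0$ on $x$. Dually, $y_m \rdef z_m \xrightarrow{\psi^n} z_{m+n}$ equals $y_m \xrightarrow{\psi^n} y_{m+n} \rdef z_{m+n}$, which is zero; precomposing with the epimorphism $y_m \rdef z_m$ then forces $\psi^n = 0$ on $z$.

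Step two is the extension property, which is the one requiring any real idea. Assume $x$ is killed by $\psi^{n_1}$ and $z$ by $\psi^{n_2}$; I claim $y$ is killed by $\psi^{n_1+n_2}$. Indeed, the composition $y_m \xrightarrow{\psi^{n_2}} y_{m+n_2} \rdef z_{m+n_2}$ equals $y_m \rdef z_m \xrightarrow{\psi^{n_2}} z_{m+n_2}$, which vanishes by hypothesis on $z$. Exactness of $x_{m+n_2} \rinf y_{m+n_2} \rdef z_{m+n_2}$ therefore yields a unique factorization $y_m \to x_{m+n_2} \rinf y_{m+n_2}$ of $\psi^{n_2}$. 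Applying $\psi^{n_1}$ and invoking naturality of $\psi$ on the inclusion $x \rinf y$, the map $\psi^{n_1+n_2} : y_m \to y_{m+n_1+n_2}$ factors through $x_{m+n_2} \xrightarrow{\psi^{n_1}} x_{m+n_1+n_2}$, which is zero. This gives the uniform bound $n_1+n_2$, so $y \in \cA'_{\gr,\nil}[2]$.

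Finally, since $\cA'_{\gr,\nil}[2]$ is a full subcategory of the abelian category $\cA'_{\gr}[2]$ closed under subobjects, quotients, and extensions, it is a Serre subcategory; in particular kernels and cokernels formed in $\cA'_{\gr}[2]$ remain in $\cA'_{\gr,\nil}[2]$, so $\cA'_{\gr,\nil}[2]$ inherits an abelian structure. The only nontrivial input is the factorization argument in the extension step; the sub/quotient closure is formal naturality, and the remaining remark is standard.
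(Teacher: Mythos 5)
Your proof is correct and takes essentially the same route as the paper: closure under subobjects and quotients by naturality of $\psi$, and closure under extensions via the bound $n_1+n_2$ (the paper's $i+j$), where you spell out with the kernel-factorization argument what the paper leaves as ``easily proved.''
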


\begin{proof}[\bf Proof]
The assertion that $\cA'_{\gr,\nil}[2]$ 
is closed under 
sub and quotient objects and finite direct sum is easily proved. 
Now we intend to prove the following assertion. 
For a short exact sequence 
$x \rinf y \rdef z$ in $\cA'_{\gr}$, 
let $i$, $j$ be integers such that 
$\psi_x^i=0$ and $\psi^j_z=0$. 
Then we can easily prove that $\psi^{i+j}_y=0$. 
Therefore $\cA'_{\gr,\nil}[2]$ 
is closed under extensions in $\cA'_{\gr}[2]$. 
\end{proof}

\begin{prop}
\label{prop:cannonical isom}
If $\cA$ has enough projective objects, 
then there is a canonical isomorphism 
$$\cA'_{\gr}[2]/\cA'_{\gr,\nil}[2]\isoto \cA[t].$$
\end{prop}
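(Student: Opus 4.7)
The plan is to construct an explicit exact functor $F : \cA'_{\gr}[2] \to \cA[t]$, identify its kernel as $\cA'_{\gr,\nil}[2]$, and then verify that the induced functor on the Serre quotient is an equivalence. Given $x \in \cA'_{\gr}[2]$ with commuting transition families $\psi,\phi : x_k \to x_{k+1}$, set
\[
F(x) := \colim\bigl(x_0 \xrightarrow{\psi} x_1 \xrightarrow{\psi} x_2 \xrightarrow{\psi} \cdots \bigr)
\]
computed in $\Lex \cA$; since $\phi$ commutes with $\psi$, it descends to an endomorphism of this colimit, making $F(x)$ an object of $\End \Lex \cA$. Exactness of $F$ is immediate from exactness of filtered colimits in the Grothendieck category $\Lex \cA$. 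A direct computation gives the key identity $F(\cF[2](a)(-k)) \cong a[t]$ for every $a \in \cA$ and $k \geq 0$: the $\psi$-colimit collapses the $i_1$-summation in $\cF[2](a)_m = \bigoplus_{i_1+i_2=m}a$ to a single copy of $a$, and the surviving $\phi$-action shifts the $i_2$-index, reproducing the definition of $a[t]$ from \ref{para:polynomial category}. Combined with the canonical epi $\bigoplus_{k \leq \deg x}\cF[2](x_k)(-k) \rdef x$ from \ref{lem:fundamental facts about}(2), this shows $F(x)$ is always a quotient of a noetherian object in $\End \Lex \cA$, hence itself lies in $\cA[t]$.

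For the kernel computation, if $\psi^{N}$ is identically zero on $x$ then trivially $F(x) = 0$. Conversely, suppose $F(x) = 0$. Each $x_k \in \cA$ is noetherian in $\Lex \cA$ by \cite[5.8.8, 5.8.9]{Pop73}, hence compact for filtered colimits in the locally noetherian Grothendieck category $\Lex \cA$, so the canonical map $x_k \to F(x) = 0$ must vanish at some finite stage, producing integers $N_k$ with $\psi_x^{N_k,k} = 0$. Set $N = \max_{k \leq \deg x} N_k$; using the degree bound together with commutativity $\psi\phi = \phi\psi$, any element of $x_k$ for $k > \deg x$ is a sum of images of the form $\psi^{i_1}\phi^{i_2}\xi$ with $\xi \in x_d$ ($d = \deg x$), and $\psi^N(\psi^{i_1}\phi^{i_2}\xi) = \phi^{i_2}\psi^{i_1+N}\xi = 0$ since $\psi^N$ kills $x_d$. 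Thus $\psi_x^{N,k}=0$ for every $k$, so $x$ is $\psi$-nilpotent. This establishes $\ker F = \cA'_{\gr,\nil}[2]$, so by the universal property of the Serre quotient $F$ descends to an exact functor $\bar F : \cA'_{\gr}[2]/\cA'_{\gr,\nil}[2] \to \cA[t]$.

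It remains to show $\bar F$ is an equivalence. Essential surjectivity is immediate from \ref{rem:other df of S-poly cat} together with $F(\cF[2](a)) = a[t]$. Faithfulness follows from the kernel identification: if $F(f) = 0$ for some $f : x \to y$, then $\im f$ has vanishing $F$, hence is nilpotent, so $f$ becomes zero in the quotient. The main obstacle is fullness: given $g : F(x) \to F(y)$ in $\cA[t]$, one must produce a roof $x \ldef x' \to y/y''$ with both $x/x'$ and $y''$ in $\cA'_{\gr,\nil}[2]$ whose image under $F$ realizes $g$. The essential input is again compactness of each $x_k$ in $\Lex \cA$: the restriction $x_k \to F(y) = \colim_m y_m$ factors through some $y_{m_k}$, but the compatibility with $\psi$ and $\phi$ holds only after composing with further powers of $\psi$. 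Assembling these factorizations coherently into a morphism of graded objects — carried out first on generators $\cF[2](a)(-k)$, where the explicit form of morphisms $a[t] \to F(y)$ in $\cA[t]$ (as in \ref{ex:morphismofa[t]}) is available, and then extended to general $x$ via the presentation of \ref{lem:fundamental facts about}(2) together with exactness and the five-lemma — constitutes the main technical work of the proof.
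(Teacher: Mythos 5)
Your functor $F$ coincides with the paper's $\Theta$ (the $\psi$-colimit is exactly $\coker(\id-\psi_x)$ on $\bigoplus x_n$), and the first half of your argument --- exactness, the computation $F(\cF[2](a)(-k))\cong a[t]$, vanishing on nilpotent objects, and $F(x)\in\cA[t]$ via \ref{lem:fundamental facts about}(2) and \ref{rem:other df of S-poly cat} --- matches the paper's. Your identification of the full kernel of $F$ (that $F(x)=0$ forces $\psi$-nilpotence) is an extra ingredient the paper does not need, and it is essentially sound. The genuine gap is in the second half: you never actually prove that $\bar F$ is an equivalence. Essential surjectivity is not ``immediate'' from \ref{rem:other df of S-poly cat}: knowing that $x$ admits an epimorphism $a[t]\rdef x$ with $a[t]=F(\cF[2](a))$ does not exhibit $x$ as $F$ of a graded object --- you must lift the kernel of that epimorphism, or equivalently the presenting morphism $\sum c_it^i\colon a[t]\to b[t]$, to the graded category (a homogenization step), and this requires an argument. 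Fullness, which you yourself flag as ``the main technical work of the proof,'' is only a plan: no argument is given that the stage-wise factorizations of $x_k\to F(y)$ through some $y_{m_k}$ can be assembled compatibly with both $\psi$ and $\phi$, nor that the outcome is independent of the choices made. So the proof stops precisely at its hardest point.

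Note also that you never use the hypothesis that $\cA$ has enough projective objects, and that hypothesis is exactly how the paper avoids verifying fullness and faithfulness directly: any $x$ in $\cA[t]$ has a presentation $a[t]\onto{\sum c_it^i} b[t]\to x\to 0$ with $a$, $b$ projective, the presenting map homogenizes to $\sum_{s+t=m}c_s\psi^s\phi^t\colon a[\psi,\phi](-m)\to b[\psi,\phi]$, and one defines $\Delta(x)$ as its cokernel; projectivity of $a[\psi,\phi]$ and $b[\psi,\phi]$ (\ref{lem:projectivity}, \ref{lem:fundamental facts about}(3)) makes $\Delta$ well defined and an inverse to the induced functor. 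To complete your proof you should either adopt this construction of an explicit inverse, or carry out in full the homogenization arguments needed for fullness and essential surjectivity; as written, neither is done.
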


\begin{proof}[\bf Proof]
We define the functor 
$$\Theta:\cA'_{\gr}[2] \to \End\Lex \cA,\ \  
x \mapsto 
(\coker(\overset{\infty}{\underset{n=0}{\bigoplus}}x_n 
\onto{\id-\psi_x}
\overset{\infty}{\underset{n=0}{\bigoplus}}x_n), \phi_x)$$ 
where 
$\displaystyle{\psi_x=\overset{\infty}{\underset{n=0}{\bigoplus}} \psi_n}$ 
and 
$\displaystyle{\phi_x=\overset{\infty}{\underset{n=0}{\bigoplus}} \phi_n}$. 
For any object $x$ in $\cA'_{\gr,\nil}[2]$, 
assume that $\psi^{m,k}_x=0$ for any non-negative integer $k$. 
Then $\displaystyle{\sum_{i=0}^{m-1}\psi_x^i}$ 
is the inverse map of $\id-\psi_x$. 
Therefore $\Theta(x)=0$. 
Next we prove that $\id-\psi_x:\bigoplus x_n \to \bigoplus x_n$ 
is a monomorphism. 
Let $K$ be the kernel of $\id-\psi_x$. 
Assume that $K$ is not the zero object. 
Then there is the maximum integer $m$ such that 
$\displaystyle{K\cap\bigoplus_{n=0}^{m}x_n=0}$. 
Then we have equalities
$$0=(\id-\psi_x)(K\cap x_{m+1})\cap x_{m+1} =K\cap x_{m+1}.$$ 
It contradicts the maximality of $m$. 
Therefore for any conflation $x \rinf y \rdef z$ in $\cA'_{\gr}[2]$, 
by applying the snake lemma to the commutative diagram below, 
we notice that $\Theta$ is an exact functor. 
$$\footnotesize{\xymatrix{
\bigoplus x_i \ar[d]_{\psi_x} \ar@{>->}[r] & 
\bigoplus y_i \ar[d]_{\psi_y} \ar@{->>}[r] & 
\bigoplus z_i \ar[d]^{\psi_z}\\
\bigoplus x_i \ar@{>->}[r] & 
\bigoplus y_i \ar@{->>}[r] & 
\bigoplus z_i  &.
}}$$
Moreover for any object $x$ in $\cA$ and any positive integer $k$, 
$\Theta(x[\psi,\phi](-k))=x[t]$. 
Therefore by 
\ref{rem:other df of S-poly cat} and 
\ref{lem:fundamental facts about} $\mathrm{(2)}$, 
we notice that 
$\Theta$ induces the exact functor 
$$\Theta':\cA'_{\gr}[2]/\cA'_{\gr,\nil}[2] \to \cA[t].$$ 
Since $\cA$ has enough projective objects, 
for any object $x$ in $\cA[t]$, 
there is a finite presentation 
$$a[t] \onto{\sum_{i=0}^m c_it^i} b[t] \to x \to 0$$ 
where $a$ and $b$ are projective objects in $\cA$ 
and therefore
$a[t]$ and $b[t]$ are also projective objects in $\cA[t]$ 
by \ref{lem:projectivity}. 
(For the notation $\sum_{i=0}^m c_it^i$, 
see \ref{ex:morphismofa[t]}.)
Then we define the functor 
$$\Delta(x):=\coker(a[\psi,\phi](-m)
\onto{\sum_{s+t=m}c_s\psi^s\phi^t} b[\psi,\phi]).$$ 
(For the notation $\sum_{s+t=m}c_s\psi^s\phi^t$, 
see \ref{ex:mor between freeobj}.) 
Since $a[\psi,\phi]$ and $b[\psi,\phi]$ are projective in $\cA'_{\gr}[2]$ 
by \ref{lem:fundamental facts about} $\mathrm{(3)}$, 
the association $\Delta$ is 
well-defined and it gives the inverse functor of $\Theta'$.
\end{proof}

\begin{cor}
\label{cor:canisom} 
We have a fibration sequence 
$$K(\cA'_{\gr,\nil}[2]) \to K(\cA'_{\gr}[2]) \to K(\cA[t]).$$
\end{cor}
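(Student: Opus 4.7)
The plan is to invoke Quillen's localization theorem for abelian categories applied to the Serre pair $(\cA'_{\gr,\nil}[2], \cA'_{\gr}[2])$. All the conceptual work has been done in the previous two results, so the proof should amount to citing the right theorem and identifying the quotient.

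First I would recall that Lemma~\ref{lem:niliSerresubcat} establishes $\cA'_{\gr,\nil}[2]$ as a Serre (thick) subcategory of the noetherian abelian category $\cA'_{\gr}[2]$. This is the structural input required: it guarantees that the Serre quotient $\cA'_{\gr}[2]/\cA'_{\gr,\nil}[2]$ exists as an abelian category and that the quotient functor is exact.

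Next I would apply Quillen's localization theorem (\cite{Qui73}, Theorem~5 of \S5), which asserts that for any Serre subcategory $\cB$ of a small abelian category $\cC$, the exact sequence $\cB \to \cC \to \cC/\cB$ induces a homotopy fibration sequence on $K$-theory spaces
$$K(\cB) \to K(\cC) \to K(\cC/\cB).$$
Applied to our situation, this yields a fibration sequence
$$K(\cA'_{\gr,\nil}[2]) \to K(\cA'_{\gr}[2]) \to K(\cA'_{\gr}[2]/\cA'_{\gr,\nil}[2]).$$
Finally I would substitute the canonical equivalence $\cA'_{\gr}[2]/\cA'_{\gr,\nil}[2] \isoto \cA[t]$ provided by Proposition~\ref{prop:cannonical isom} to identify the third term with $K(\cA[t])$, giving the claimed fibration.

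The only potential obstacle is pure bookkeeping, namely verifying the smallness and abelian hypotheses of the localization theorem. These are immediate: $\cA$ is essentially small by the standing convention, $\cA'_{\gr}[2]$ inherits essential smallness from $\cA$, and its abelian structure is established in \S\ref{sec:graded cat}. Therefore no serious obstacle remains and the corollary follows formally.
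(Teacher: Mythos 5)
Your proposal is correct and matches the paper's (implicit) argument exactly: the corollary is deduced from Lemma~\ref{lem:niliSerresubcat} (Serre subcategory), Quillen's localization theorem, and the identification $\cA'_{\gr}[2]/\cA'_{\gr,\nil}[2]\isoto\cA[t]$ of Proposition~\ref{prop:cannonical isom}. No further comment is needed.
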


\begin{prop}
\label{prop;devissage}
The inclusion functor $\cA'_{\gr}[1] \rinc \cA'_{\gr,\nil}[2]$ 
defined by $(x,\psi^1) \mapsto (x,\psi^1,\phi=0)$ induces 
the isomorphism on their $K$-theories.
\end{prop}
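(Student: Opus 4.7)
The plan is to invoke Quillen's devissage theorem for the inclusion $\cA'_{\gr}[1] \rinc \cA'_{\gr,\nil}[2]$. The target category is abelian by Lemma~\ref{lem:niliSerresubcat}, and the inclusion is fully faithful and exact. Its essential image is the full subcategory of $\cA'_{\gr,\nil}[2]$ cut out by the vanishing of one of the two generating endomorphisms, and this image is closed under subobjects and quotients in $\cA'_{\gr,\nil}[2]$ because the vanishing of a morphism is inherited by any sub or quotient object. Devissage therefore reduces the proposition to the construction, for every $y \in \cA'_{\gr,\nil}[2]$, of a finite filtration whose successive quotients lie in the essential image.

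To construct the filtration, let $\psi$ denote the nilpotent generator and fix $n$ with $\psi^n = 0$. I would set
$$W_i := \ker\bigl(\psi^i : y \longrightarrow y(i)\bigr), \qquad 0 \leq i \leq n.$$
The commutation relation $\psi\phi = \phi\psi$ in $<2>$ implies $\phi(W_i) \subset W_i$, so $W_i$ is a genuine subobject of $y$ in $\cA'_{\gr}[2]$. Tautologically $0 = W_0 \subset W_1 \subset \cdots \subset W_n = y$; moreover $\psi(W_{i+1}) \subset W_i$, so $\psi$ acts as zero on each quotient $W_{i+1}/W_i$, which places those quotients in the essential image of the inclusion. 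Lemma~\ref{lem:noetherian} ensures each $W_i$ and each $W_{i+1}/W_i$ is noetherian, and the Serre property of $\cA'_{\gr,\nil}[2]$ from Lemma~\ref{lem:niliSerresubcat} ensures they remain nilpotent, so the entire filtration lives in $\cA'_{\gr,\nil}[2]$.

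With these ingredients, Quillen's devissage theorem \cite{Qui73} supplies the desired isomorphism of $K$-theories. The one point requiring genuine verification is the assertion that $W_i$ is an admissible subobject of $y$ in the two-variable graded category; this is where the commutation $\psi\phi = \phi\psi$ actually enters and where the two-variable structure plays a nontrivial role. Apart from this, the proof is the standard devissage pattern, and I expect no further obstruction.
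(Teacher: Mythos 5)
Your argument is correct and is essentially the paper's own proof: both verify that the image of $\cA'_{\gr}[1]$ is closed under subobjects and quotients and then apply Quillen's d\'evissage theorem to a finite filtration by powers of the nilpotent generator. The only (immaterial) difference is that you use the ascending kernel filtration $W_i=\ker(\psi^i)$ where the paper uses the descending image filtration $\{\im \psi^k\}$; in both cases the successive quotients are killed by the nilpotent generator and hence lie in $\cA'_{\gr}[1]$.
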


\begin{proof}[\bf Proof]
First notice that 
$\cA'_{\gr}[1]$ is closed under admissible sub and quotient objects 
in $\cA'_{\gr,\nil}[2]$. 
Moreover for any $x$ in $\cA'_{\gr,\nil}[2]$, 
let us consider the filtration $\{\im\psi^k\}_{k\in\bbN}$ of $x$. 
Then for each $k$, $\im\psi^k/\im\psi^{k+1}$ is 
isomorphic to an object in $\cA'_{\gr}[1]$. 
Therefore we get the desired result by the d\'evissage theorem.
\end{proof}

\begin{cor}
\label{cor:devissage}
We have the canonical isomorphism 
$$K(\cA)\otimes_{\bbZ}\bbZ[\sigma] \isoto K(\cA'_{\gr,\nil}[2]).$$
\end{cor}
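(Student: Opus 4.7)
The plan is to chain together two isomorphisms that are already established in the preceding material. Specifically, Proposition~\ref{prop;devissage} provides the isomorphism $K(\cA'_{\gr}[1]) \isoto K(\cA'_{\gr,\nil}[2])$ induced by the inclusion $(x,\psi^1)\mapsto(x,\psi^1,0)$, and Theorem~\ref{cor:canisomoftf} applied in the special case $n=1$ gives the isomorphism $\bbZ[\sigma]\otimes_{\bbZ}K(\cA)\isoto K(\cA'_{\gr}[1])$. Composing these two yields the desired isomorphism, so the corollary is essentially a direct consequence of results already at hand.

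More concretely, I would first invoke Theorem~\ref{cor:canisomoftf} with $n=1$ to identify $K(\cA'_{\gr}[1])$ with the free $\bbZ[\sigma]$-module $\bbZ[\sigma]\otimes_{\bbZ}K(\cA)$ (so that the class of an object $\cF[1](x)(-k)$ corresponds to $\sigma^k\cdot[x]$). Then I would compose with the map $K(\cA'_{\gr}[1]) \to K(\cA'_{\gr,\nil}[2])$ from Proposition~\ref{prop;devissage}, which is an isomorphism by the dévissage argument there (every nilpotent object admits a finite filtration by subquotients $\im\psi^k/\im\psi^{k+1}$ which lie in the image of $\cA'_{\gr}[1]$). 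The composite is therefore an isomorphism.

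There is essentially no obstacle here; the statement is a formal corollary, and the only thing worth remarking on is tensorial compatibility. If one wants an explicit description of the isomorphism it sends $\sigma^k\otimes[x]\in\bbZ[\sigma]\otimes_{\bbZ}K(\cA)$ to the class of $\cF[1](x)(-k)$ viewed inside $\cA'_{\gr,\nil}[2]$ via $(y,\psi^1)\mapsto(y,\psi^1,0)$. No extra verification is required beyond citing the two results.
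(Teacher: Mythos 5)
Your proof is correct and is exactly the argument the paper intends: the corollary is stated without a separate proof precisely because it follows by composing Theorem~\ref{cor:canisomoftf} in the case $n=1$ with the d\'evissage isomorphism of Proposition~\ref{prop;devissage}, which is what you do. Your explicit description of the map (sending $\sigma^k\otimes[x]$ to the class of $\cF[1](x)(-k)$ with the second structure map zero) also matches how the isomorphism is used later in Lemma~\ref{lem:commutative diagram}, so nothing is missing.
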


\subsection{The proof of main theorem}
\label{subsec:The proof of main theorem}

In this subsection, 
we will finish the proof of the main theorem. 
The key lemma is the following.

\begin{lem}
\label{lem:commutative diagram}
There is the commutative diagram below
$$\xymatrix{
\bbZ[\sigma]\otimes_{\bbZ}K(\cA) \ar[r] \ar[d]_{(1-\sigma)\otimes \id} & K(\cA'_{\gr,\nil}[2]) 
\ar[d]\\
\bbZ[\sigma]\otimes_{\bbZ}K(\cA) \ar[r] & K(\cA'_{\gr}[2]) & .
}$$
\end{lem}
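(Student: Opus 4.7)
The plan is to check commutativity on the generators $\sigma^k \otimes [a]$ of $\bbZ[\sigma]\otimes_{\bbZ}K(\cA)$, where $a$ ranges over noetherian objects of $\cA$ and $k\ge 0$, and reduce it to a single conflation in $\cA'_{\gr}[2]$.

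First I would compute each path explicitly on such a generator. By Theorem~\ref{cor:canisomoftf} applied with $n=2$, the bottom horizontal sends $\sigma^j \otimes [a]$ to $[\cF[2](a)(-j)]$; hence the composite along the bottom and left yields
$$[\cF[2](a)(-k)] - [\cF[2](a)(-(k+1))] \in K(\cA'_{\gr}[2]).$$
For the other path, Corollary~\ref{cor:devissage} is the composite of Theorem~\ref{cor:canisomoftf} with $n=1$ and the d\'evissage identification $K(\cA'_{\gr}[1]) \isoto K(\cA'_{\gr,\nil}[2])$ of Proposition~\ref{prop;devissage}; it therefore sends $\sigma^k \otimes [a]$ to the class of $(\cF[1](a)(-k),\phi=0)$, that is, the free $1$-graded object $\cF[1](a)(-k)$ regarded as a $2$-graded object with trivial $\phi$-action. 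The right vertical map then carries this class, unchanged, into $K(\cA'_{\gr}[2])$.

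Commutativity thus reduces to the single identity
$$[\cF[2](a)(-k)] - [\cF[2](a)(-(k+1))] = [(\cF[1](a)(-k),\phi=0)]$$
in $K(\cA'_{\gr}[2])$. To produce it I would exhibit the conflation
$$\cF[2](a)(-(k+1)) \rinf \cF[2](a)(-k) \rdef (\cF[1](a)(-k),\phi=0)$$
in $\cA'_{\gr}[2]$, in which the inflation is the canonical translation morphism $\phi=\psi^2$ and the deflation kills the image of $\phi$. Injectivity of $\phi$ on $\cF[2](a)=a[\psi,\phi]$ is immediate from the free description (and is also a piece of Proposition~\ref{prop:fun pro Kos hom}~(1)), and the quotient is $a[\psi]=\cF[1](a)$ equipped with zero $\phi$-action; the shift by $(-k)$ preserves exactness, and the additivity relation attached to this conflation is exactly the identity above.

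The main bookkeeping obstacle is confirming that the isomorphism of Corollary~\ref{cor:devissage} really sends $\sigma^k \otimes [a]$ to $[(\cF[1](a)(-k),\phi=0)]$, i.e.\ unwinding the chain of identifications through $\cA'_{\gr}[1]$ and the inclusion $\cA'_{\gr}[1]\rinc \cA'_{\gr,\nil}[2]$. Once that identification is transparent, the conflation above is essentially forced by the free description of $\cF[2](a)$, and no further input beyond additivity is required.
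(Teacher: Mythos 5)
Your proposal is correct and follows essentially the same route as the paper: both verify commutativity on the generators $\sigma^k\otimes[a]$ via Theorem~\ref{cor:canisomoftf}, and both conclude by applying the additivity theorem to the conflation $a[\psi,\phi](-1)\overset{\phi}{\rinf}a[\psi,\phi]\rdef(a[\psi],\psi,0)$ (you simply record its $(-k)$-shift explicitly, which the paper leaves implicit).
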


\begin{proof}[\bf Proof]
An object $a$ in $\cA$ goes to $(a[\psi],\psi,0)$ by 
the functors $\cA \to \cA'_{\gr,\nil}[2] \to \cA'_{\gr}[2]$ and 
goes to $a[\psi,\phi]$ 
by the functor $\cF[2]:\cA \to \cA'_{\gr}[2]$. 
Moreover 
let us notice that the functor $\cF[2](-k):\cA\to\cA'_{\gr}[2]$ induces 
$\bbZ[\sigma]\otimes_{\bbZ}K(\cA) \onto{\sigma^k} 
\bbZ[\sigma]\otimes_{\bbZ}K(\cA)\isoto K(\cA'_{\gr}[2])$ 
by \ref{cor:canisomoftf}. 
On the other hand, 
for each object $a$ in $\cA$, 
there is an exact sequence in $\cA'_{\gr}[2]$ 
$$a[\psi,\phi](-1) \overset{\phi}{\rinf} a[\psi,\phi] \rdef (a[\psi],\psi,0).$$
By the additivity theorem, 
this implies that the diagram in the statement 
is commutative. 
\end{proof}

\begin{proof}[\bf Proof of \ref{thm:main thm}]
The assertion follow from the commutative diagram of exact sequences below. 
$$\xymatrix{
\bbZ[\sigma]\otimes_{\bbZ} K(\cA) \ar@{>->}[r]^{(1-\sigma)\otimes \id} \ar[d]_{\wr} & \bbZ[\sigma]\otimes_{\bbZ} K(\cA) \ar@{->>}[r] \ar[d]^{\wr} & 
K(\cA) \ar[d]^{K(-\otimes_{\cA}\bbZ[t])}\\
K(\cA'_{\gr,\nil}) \ar[r] & K(\cA'_{\gr}) \ar[r] & K(\cA[t]) & .
}$$
\end{proof}







\begin{thebibliography}{1234567}

\bibitem[BGT10]{BGT10}
A.~J.~Blumberg, D.~Gepner and G.~Tabuada, 
\emph{{A} universal characterization of higher algebraic {$K$}-theory}, 
preprint, available at arXiv:1001.2282 (2010).

\bibitem[CT09]{CT09}
D.~C.~Cisinski and G.~Tabuada, 
\emph{{N}on connective {$K$}-theory via universal invariants}, 
preprint, available at arXiv:0903.3717 (2009). 

\bibitem[GM96]{GM96}
S.~I.~Gelfand and Yu.~I.~Manin, 
\emph{Methods of homological algebra}, 
Springer-Verlag Berlin Heidelberg New York (1996).

\bibitem[Kel90]{Kel90}
B.~Keller, 
\emph{{C}hain complexes and stable categories}, 
manus. math. \textbf{67} (1990), p.379-417.

\bibitem[MV99]{MV99}
F.~Morel and V.~Voevodsky,
\emph{$\mathbf{A}^1$-homotopy theory of schemes}, 
Publ. Math. I.H.E.S. \textbf{90} (1999), 
p.45-143. 

\bibitem[Pop73]{Pop73}
N.~Popescu, 
\emph{Abelian categories with applications to rings and modules}, 
Academic Press, London. 
London Math. Soc. Monographs, No. \textbf{3} (1973).

\bibitem[Qui73]{Qui73}
D.~Quillen, 
\emph{{H}igher algebraic {$K$}-theory I}, 
In Higher K-theories, 
Springer Lect. Notes Math. \textbf{341} (1973), 
p.85-147.


\bibitem[Sch00]{Sch00}
M.~Schlichting, 
\emph{{D}elooping the {$K$}-theory of exact categories and negative {$K$}-groups}, thesis, 
available at 
{\tt{http://www.warwick.ac.uk/~masiap/}}

\bibitem[Sch06]{Sch06}
M.~Schlichting, 
\emph{{N}egative {$K$}-theory of derived categories}, 
Math. Z. \textbf{253} (2006), 
p.97-134.

\bibitem[Sch11]{Sch11}
M.~Schlichting, 
\emph{{H}igher algebraic {$K$}-theory (after Quillen, Thomason and others)}, 
Topics in Algebraic and Topological $K$-theory, 
Springer Lecture Notes in Math. \textbf{2008} (2011), 
p.167-242. 




\bibitem[Voe98]{Voe98}
V.~Voevodsky, 
\emph{{$\mathbf{A}^1$}-homotopy theory}, 
Proceeding of the international congress of mathematicians, 
Berlin (1998), 
p.579-604.

\bibitem[Voe00]{Voe00}
V.~Voevodsky, 
\emph{{T}riangulated categories of motives over field}, 
in \emph{Cycles, transfers, and motivic homology theories}, 
Annals of Mathematics Studies, vol \textbf{143}, 
Princeton University press, (2000), 
p. 188-254.


\bibitem[Wal85]{Wal85}
F.~Waldhausen, 
\emph{{A}lgebraic K-theory of spaces}, 
In Algebraic and geometric topology, 
Springer Lect. Notes Math. \textbf{1126} (1985), 
p.318-419.
\end{thebibliography}
\end{document}